\newtheorem{theorem}{Theorem}
\newtheorem{lemma}{Lemma}
\newtheorem{proposition}{Proposition}
\newtheorem{corollary}{Corollary}
\newtheorem{remark}{Remark}
\renewenvironment{proof}[1][Proof]{\begin{trivlist}
\item[\hskip \labelsep {\bfseries {#1.}}]}{\end{trivlist}}
\begin{document}







\title{Sequential two-fold Pearson chi-squared test and tails of the Bessel process distributions}

\author{Savelov M.P.}
\affil{Lomonosov Moscow State University\\Moscow Institute of Physics and Technology\\
Email: savelov.mp@mipt.ru}

\maketitle

\begin{abstract}
We find asymptotic formulas for error probabilities of two-fold Pearson goodness-of-fit test as functions of two critical levels. These results may be reformulated in terms of tails of two-dimensional distributions of the Bessel process. Necessary properties of the Infeld function are obtained.
\end{abstract}

\section{Introduction}

In this paper we study joint distributions of the values of Pearson statistics arising in the sequential $\chi^2 $ test (see \cite{Sevastyanov}, \cite{Selivanov}-\cite{Tymanyan}). Consider a scheme of independent trials with $N$ outcomes ($N \ge 2$). The hypothesis $H$ states that the probability of the $j$-th outcome is $p_j, j = 1,...,N $. Let $n_1<n_2<... <n_r $ be the volumes of nested samples. We consider the variables $\mu_j^{(k)} $ which are equal to 1 if $j$-th outcome appeared in the $k$-th trial and $\mu_j^{(k)} = 0 $ in another case. Denote by $\nu_{ij}: = \sum_{k=1}^{n_i} \mu_j^{(k)} $ the number of $j$ -th outcomes in the first $n_i$ trials. We consider the Pearson statistics $X(n_i):=\sum_{j=1}^N \frac{(\nu_{ij} - n_i p_j)^2}{n_i p_j}$. Define $X:=( X(n_1), X(n_2), ..., X(n_r))$. A sequential $r$-fold Pearson chi-squared test is constructed with the help of statistics $X$ as follows: the critical values $x_1^*, ..., x_r^*$ are selected; the hypothesis $H$ is rejected if and only if $X(n_k) > x_k^*$ for all $k=1,2,...,r$, i.e. the probability $\alpha$ of rejection of the basic hypothesis $H$ in case when it is true is equal to $ P(X(n_1) > x_1^*, X(n_2) > x_2^*, ..., X(n_r) > x_r^* | H)$. In other words, $\alpha$ is the level of significance of the test.

In \cite{Sevastyanov} the following formula for $\alpha$ was obtained (see (4.10)):
\begin{eqnarray}
\alpha = \alpha(x_1^*, ..., x_r^*)= \int_{\frac{x_1^*}2}^{\infty}...\int_{\frac{x_r^*}2}^{\infty} p_{r,0}(u_1, ..., u_r)du_1...du_r \nonumber
\end{eqnarray}
where
\begin{gather}
p_{r,0}(u_1, ...,u_r) =\frac{e^{-\sum_{i=1}^r \frac{u_i}{\lambda_i} } }{K_r^{1+\delta} (b_1 \cdot ... \cdot b_{r-1})^{1+\delta} \Gamma(1+\delta) \Pi_{i=1}^r  \lambda_i }  \cdot \( \frac{u_1 u_r}{\lambda_1 \lambda_r}\)^{\frac{\delta}2} \cdot \Pi_{i=1}^{r-1} I_{\delta} \( 2 \sqrt{b_i \cdot \frac{u_i u_{i+1}}{\lambda_i \lambda_{i+1}} } \). \label{5STATbigformulaoshib}
\end{gather}
The parameters $K_r$, $\theta$, $\lambda_i$, $b_i$ are expressed in terms of $n_i$, $N$ and $r$ as follows:
\begin{gather}
\rho_0: =0, \  \rho_r := 0, \rho_i: = \sqrt{\frac{n_i}{n_{i+1}}}, \ b_i:= \frac{\rho_i^2 (1-\rho_{i-1}^2)(1-\rho_{i+1}^2)}{(1-\rho_{i-1}^2\rho_i^2)(1-\rho_i^2 \rho_{i+1}^2)}, \nonumber\\
\lambda_i = \frac{(1-\rho_{i-1}^2)(1-\rho_i^2)}{1-\rho_{i-1}^2 \rho_i^2}, K_r:=\frac{\prod_{k=1}^{r-1}(1-\rho_k^2 \rho_{k-1}^2)}{\prod_{k=1}^{r-1}(1-\rho_k^2)}, \ \delta = \frac{N-3}2. \nonumber
\end{gather}
Here $I_{\delta}(x)$ is the Infeld function (see, e.g., \cite{lebedev}) which is defined as follows: $I_{\delta}(x):=\sum_{k = 0}^{\infty} \frac{\(\frac{x}{2}\)^{\delta + 2k}}{\Gamma(k+1)\Gamma(k+\delta+1)}$.

Moreover, in the formula (\ref{5STATbigformulaoshib})  (see (4.10) in \cite{Sevastyanov}) there is a typo, which is easy to establish by going from formula (4.7) to formula (4.8), in which the degree of the product $(b_1 \cdots b_{r-1})$ in the denominator is incorrect and it led to the error in (4.10).

Thus, the correct formula for $p_{r,0}$ has the following form:
\begin{gather}
p_{r,0}(u_1, ...,u_r) =\frac{e^{-\sum_{i=1}^r \frac{u_i}{\lambda_i} } }{K_r^{1+\delta} (b_1 \cdot ... \cdot b_{r-1})^{\frac{\delta}2} \Gamma(1+\delta) \Pi_{i=1}^r  \lambda_i }  \cdot \( \frac{u_1 u_r}{\lambda_1 \lambda_r}\)^{\frac{\delta}2} \cdot \Pi_{i=1}^{r-1} I_{\delta} \( 2 \sqrt{b_i \cdot \frac{u_i u_{i+1}}{\lambda_i \lambda_{i+1}} } \). \label{5STATbigformula}
\end{gather}


In this paper we will consider only the case $ r = 2 $. Let us introduce the following notation.
By definition, let $c:=\sqrt{\frac{n_1}{n_2}}$, $K_2:= \frac{1}{1-c^2}$, $\beta:= 1 - c^2$, $\delta: = \frac{N-3}2$. It is easy to see that $0 < c < 1$.
In addition, we assume (following \cite{Sevastyanov}) that $n_1, n_2 \to \infty$ so that $c$ converges to some limiting value, which we also denote by $c$ (assume that $ 0 <c <1 $).

In this notation (and in case $r=2$) the formula (\ref{5STATbigformula}) transforms to the following equality:\begin{eqnarray}
\alpha = \alpha(x_1^*, x_2^*) = \int_{\frac{x_1^*}2}^{\infty}\int_{\frac{x_2^*}2}^{\infty} p_{2,0}(u_1, u_2)du_1 du_2 \nonumber
\end{eqnarray}
where
$$p_{2,0}(u_1,u_2)=\frac{e^{- (\frac{u_1}{\beta} + \frac{u_2}{\beta}) } }{K_2^{1+\delta} c^{\delta} \Gamma(1+\delta) \beta^2 }  \cdot \( \frac{u_1 u_2}{\beta^2}\)^{\frac{\delta}2} \cdot I_{\delta} \( \frac{2c\sqrt{ u_1 u_2 }}{\beta} \).$$

Proceeding from this formula, we obtain asymptotic formulas for the significance level $\alpha $ (in the case $ r = 2 $) and also estimate the error of this asymptotic formula. Note that it allows us (with the help of Bonferroni inequality) to obtain two-sided estimates of $ \alpha $ in the case of an arbitrary $r$.

In order to obtain explicit formulas for $ \alpha $ we require the properties of the Infeld function $I_{\nu}(x)$, which are presented in the following theorem and are of independent interest. Let
\begin{gather}
\Psi(\nu, x):=e^{-2x} + \left( \frac{|4\nu^2 - 1|}{8x}  + G(\nu, x) \cdot \frac{|(4\nu^2 - 1)(4 \nu^2-9)|}{32x^2} \right) \cdot (1 + e^{-2x})\nonumber
\end{gather}
where $G(\nu, x) = \( 1 - \frac{\nu - \frac12}{2 x}\)^{ - 2\nu - 1}$ if $\nu \ge \frac12$ and
$G(\nu, x) =      \biggl( 1 - \frac{\nu+\frac32}{2x}\biggr)^{-\nu-\frac32} \cdot \( 1 + \frac{2 \nu + 2}{x}\)$ if
$0 \le \nu < \frac12$. Obviously, the function $\Psi(\nu, x)$ decreases monotonically as $x > 0$.
The following theorem holds true.
\begin{theorem} \label{5STATth1}
If $\nu > \frac12$ and $2x_0 > \nu - \frac12$ or if $0 \le \nu < \frac12$ and $2x_0 > \nu + \frac32$ then the inequality $\Biggl|  \frac{I_{\nu}(x)}{\left( \frac{e^x}{\sqrt{2 \pi x}} \right)} -1 \Biggr| \le \Psi(\nu, x)$ holds true for all $x \ge x_0$.
If  $\nu = \frac12$ then $\Biggl|  \frac{I_{\nu}(x)}{\left( \frac{e^x}{\sqrt{2 \pi x}} \right)} -1 \Biggr| \le \Psi(\nu, x) := e^{-2x}$ for all $x > 0$.
\end{theorem}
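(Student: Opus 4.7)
My starting point is the classical Poisson-type integral representation (valid for $\nu > -\tfrac12$ and $x > 0$)
$$I_\nu(x) = \frac{(x/2)^\nu}{\sqrt{\pi}\,\Gamma(\nu+\tfrac12)}\int_{-1}^{1}(1-t^2)^{\nu-1/2}\, e^{xt}\,dt,$$
which, after the substitution $u = x(1-t)$, becomes
$$\frac{I_\nu(x)}{e^x/\sqrt{2\pi x}} \;=\; \frac{1}{\Gamma(\nu+\tfrac12)}\int_0^{2x} u^{\nu-1/2}\Bigl(1-\tfrac{u}{2x}\Bigr)^{\!\nu-1/2} e^{-u}\,du.$$
The quantity to estimate is thus the deviation of the right-hand side from $1$. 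The exceptional case $\nu = \tfrac12$ is immediate: the integrand collapses to $e^{-u}$, the integral equals $1-e^{-2x}$, and the claim holds with equality $\Psi(\tfrac12,x) = e^{-2x}$.

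For $\nu \ne \tfrac12$, use $\Gamma(\nu+\tfrac12) = \int_0^\infty u^{\nu-1/2} e^{-u}\,du$ to split
$$\frac{I_\nu(x)}{e^x/\sqrt{2\pi x}} - 1 \;=\; \frac{1}{\Gamma(\nu+\tfrac12)}\!\int_0^{2x}\! u^{\nu-1/2}\!\Bigl[\bigl(1-\tfrac{u}{2x}\bigr)^{\!\nu-1/2}\!-\!1\Bigr] e^{-u}\,du \;-\; \frac{\Gamma(\nu+\tfrac12,\,2x)}{\Gamma(\nu+\tfrac12)}.$$
A routine integration-by-parts bound on the upper incomplete-Gamma term, whose validity requires precisely $2x \ge \nu-\tfrac12$ (for $\nu \ge \tfrac12$) or $2x \ge \nu+\tfrac32$ (for $0 \le \nu < \tfrac12$), yields the majorant $e^{-2x}$ and accounts for the standalone $e^{-2x}$ summand of $\Psi$. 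For the main term I Taylor-expand $(1-y)^{\nu-1/2} = 1 - (\nu-\tfrac12)\,y + R(y)$ with the Lagrange-form bound $|R(y)| \le \tfrac12|(\nu-\tfrac12)(\nu-\tfrac32)|\,y^2 (1-y)^{\nu-5/2}$; integrating the linear part against $u^{\nu-1/2}e^{-u}/\Gamma(\nu+\tfrac12)$ reproduces $-\tfrac{4\nu^2-1}{8x}$ exactly (using $\Gamma(\nu+\tfrac32)/\Gamma(\nu+\tfrac12) = \nu+\tfrac12$), which delivers the $|4\nu^2-1|/(8x)$ summand of $\Psi$.

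What remains is to majorize the quadratic remainder
$$\mathcal{I}(\nu,x) \;:=\; \frac{1}{(2x)^2\,\Gamma(\nu+\tfrac12)} \int_0^{2x} u^{\nu+3/2}\bigl(1-\tfrac{u}{2x}\bigr)^{\nu-5/2} e^{-u}\,du$$
by $G(\nu,x)\cdot \frac{|(4\nu^2-1)(4\nu^2-9)|}{32\,x^2}$. For $\nu \ge \tfrac12$ the substitution $u \mapsto v/(1-(\nu-\tfrac12)/(2x))$ (admissible thanks to $2x_0 > \nu-\tfrac12$) converts $\mathcal{I}$ into a Gamma-type integral whose evaluation, together with the identity $\Gamma(\nu+\tfrac52)/\Gamma(\nu+\tfrac12) = (\nu+\tfrac12)(\nu+\tfrac32)$, throws off the factor $(1-(\nu-\tfrac12)/(2x))^{-2\nu-1}$ and hence the first form of $G(\nu,x)$. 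For $0 \le \nu < \tfrac12$ the exponent $\nu - \tfrac52$ is too negative for the same substitution to work directly; one factors $(1-u/(2x))^{\nu-5/2} = (1-u/(2x))^{\nu-1/2}(1-u/(2x))^{-2}$, applies the stronger threshold $2x_0 > \nu+\tfrac32$, and carries out a two-piece split of the integration range to obtain the second form of $G(\nu,x)$ with the corrective factor $(1+(2\nu+2)/x)$. Finally, the $(1+e^{-2x})$ factor in $\Psi$ is absorbed by adjoining to each polynomial-in-$1/x$ bound the matching contribution from the $\int_{2x}^{\infty}$ tail that appears in the truncation.

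\textbf{Main obstacle.} The technical crux is the case-dependent bound for $\mathcal{I}(\nu,x)$ that extracts $G(\nu,x)$ in precisely its prescribed closed form: the two regimes $\nu \ge \tfrac12$ and $0 \le \nu < \tfrac12$ require distinct substitutions dictated by the sign of $\nu - \tfrac52$, which is exactly why the theorem's hypothesis on $x_0$ takes the two different forms, and arranging the constants so that the final coefficient in front of $G(\nu,x)$ is the clean $\frac{|(4\nu^2-1)(4\nu^2-9)|}{32\,x^2}$ claimed in the statement.
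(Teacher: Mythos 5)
Your route through the Poisson integral $I_\nu(x)=\frac{(x/2)^\nu}{\sqrt{\pi}\,\Gamma(\nu+\frac12)}\int_{-1}^{1}(1-t^2)^{\nu-1/2}e^{xt}\,dt$ is genuinely different from the paper's, which writes $I_\nu$ through the Hankel functions $H_\nu^{(1)},H_\nu^{(2)}$ and imports Weber's explicit remainder bounds from Watson; there the standalone $e^{-2x}$ in $\Psi$ is simply the ratio $e^{-x}/e^{x}$ of the two Hankel contributions, so no cancellation analysis is needed. Your reduction to $\frac{1}{\Gamma(\nu+\frac12)}\int_0^{2x}u^{\nu-1/2}(1-\frac{u}{2x})^{\nu-1/2}e^{-u}\,du$ is correct, and the case $\nu=\frac12$ is handled correctly. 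However, for $\nu>\frac12$ your sketch has two genuine gaps.

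First, the claim that the truncation term $\Gamma(\nu+\frac12,2x)/\Gamma(\nu+\frac12)$ is majorized by $e^{-2x}$ is false for every $\nu>\frac12$: already for $\nu=\frac32$ one has $\Gamma(2,2x)/\Gamma(2)=(1+2x)e^{-2x}$. This is not a harmless constant. For $\nu=\frac32$ both of your pieces are negative, so their absolute values add exactly, and the total stays below $\Psi(\frac32,x)=e^{-2x}+\frac1x(1+e^{-2x})$ only because the deficit $\frac{\nu-\frac12}{2x}\int_{2x}^{\infty}u^{\nu+1/2}e^{-u}\,du$ hidden in your linear term (which you discard when you replace $\int_0^{2x}$ by the full Gamma integral) cancels the polynomially large part of the tail. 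Bounding the two pieces separately as proposed gives at best $\frac1x+(1+2x)e^{-2x}$, which exceeds $\Psi(\frac32,x)$ for all $x>\frac{1}{\sqrt2}$; so the theorem is not obtained without restructuring the estimate to keep that cancellation. Second, the Lagrange majorant $|R(y)|\le\frac12|(\nu-\frac12)(\nu-\frac32)|\,y^2(1-y)^{\nu-5/2}$ makes your $\mathcal{I}(\nu,x)$ a divergent integral for every $\frac12<\nu<\frac32$, since $(1-\frac{u}{2x})^{\nu-5/2}$ is non-integrable at $u=2x$ there while the true remainder is bounded near $y=1$; as written, the step is vacuous precisely in part of the range you assign to the ``direct substitution.'' Both defects are structural reasons why the paper's Hankel-function decomposition is the cleaner road to the specific function $\Psi$.
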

Next, we need the following notation: ${\rho}:=\sqrt{\frac{{x}_2^*}{{x}_1^*}}$ and ${\lambda} := {\frac{x_1^*}{2\beta}} $.  The following proposition holds true.
\begin{proposition} \label{5STATpredl1}
Let $N \ge 3$, ${\lambda}>1$,
\begin{gather}
x_1^*x_2^* > \frac{(1-c^2)^2}{c^2}\cdot \( \(\frac{N}4 -1 \)^2\cdot I(N > 4) +\(\frac{N}4\)^2\cdot I(N = 3) \), \nonumber
\end{gather}
and $c<\sqrt{\frac{{x}_2^*}{{x}_1^*}} < \frac1{c}$. Then the set of numbers  $\varepsilon$ satisfying the following conditions :
$$0 < \varepsilon < \min\( \frac{{\rho}}{c}-1, \frac1{c}-\rho \), \ \ \ \varepsilon(2\min({\rho}-c, 1-c{\rho})+\varepsilon) <  \frac{1-c^2}{c^2} - ({\rho}^2 - 2c{\rho} + 1)$$
is nonempty and for each of such $\varepsilon$ the following relation holds:
\begin{gather}
\alpha(x_1^*, x_2^*) =  \frac{(x_1^* x_2^*)^{\frac{N}4}}{(2c)^{\frac{N}{2}-1}\Gamma(\frac{N-1}2)\sqrt{ \pi(1-c^2)}} \cdot  I_{2} (1+ \theta_1) \nonumber
\end{gather}
where
\begin{gather}
I_{2} = \int_{ 1 }^{\infty} dt_1 \int_{  1}^{\infty}  dt_2 \cdot e^{- {\lambda}(t_1^2- 2c {\rho} t_1 t_2 + {\rho}^2 t_2^2)  } \cdot (t_1 t_2)^{\delta + \frac12} = \nonumber \\
= e^{-{\lambda}({\rho}^2-2c{\rho}+1)} \cdot \(\frac{(1+\theta_{2})(1-\theta_{3})(1-\theta_{4})(1+\theta_{5})(1-\theta_{6})(1-\theta_{7})}{4{\lambda}^2 {\rho} ({\rho}-c)(1-c{\rho})} + \tilde{I}_{4}\) \nonumber
\end{gather}
and
\begin{gather}
0 \le \tilde{I}_{4}  \le e^{- {\lambda}\varepsilon(2\min({\rho}-c, 1-c{\rho})+\varepsilon)} \cdot
e^{({\rho}^2 - 2c {\rho} +1 + \varepsilon(2\min({\rho}-c, 1-c{\rho})+\varepsilon))}I_{6}, \nonumber \\
0 \le I_{6} \le \frac1{{\rho}^{\frac{N}2} } \cdot \((2c)^{\frac{N}2-1} \sqrt{\pi} \cdot \frac{\Gamma(\frac{N-1}2)}{2(1-c^2)^{\frac{N-1}2}} + 2^{\frac{N}2-2}  \cdot \frac{\(\Gamma(\frac{N}4)\)^2}{2(1-c^2)^{\frac{N}4 }}
 \), \nonumber \\
|\theta_1| \le \Psi\(\frac{N-3}2, \frac{c\sqrt{x_1^*x_2^*} }{1-c^2}\), \ \ \
0 \le \theta_{2}  \le ((1+\varepsilon)(1+\frac{\varepsilon}{{\rho}}))^{\frac{N}2-1}-1, \ \ \
0 \le \theta_{3} \le \frac{\varepsilon}{{\rho} - c}, \nonumber \\
0 \le \theta_{4} \le e^{-{\lambda}\varepsilon(2({\rho} - c - c \varepsilon)+ \varepsilon)}, \ \ \ 0 \le \theta_{5} \le \frac{c\varepsilon}{{\rho}-c-c\varepsilon},\nonumber \\
0 \le \theta_{6} \le \frac{\varepsilon}{1-c{\rho}+\varepsilon}, \ \ \theta_{7} =e^{-{\lambda} \varepsilon(2(1-c{\rho}) + \varepsilon)}. \nonumber
\end{gather}
\end{proposition}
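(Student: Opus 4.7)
The plan is to reduce $\alpha$ to a Laplace-type integral, extract the leading contribution from the minimiser $(1,1)$ of the quadratic form in the exponent, and quantify the error. The admissible set of $\varepsilon$ is nonempty: a short calculation gives $\frac{1-c^2}{c^2} - (\rho^2 - 2c\rho + 1) = \frac{(1-c^2)^2 - c^2(\rho-c)^2}{c^2}$, which is strictly positive under $c < \rho < 1/c$, so both constraints on $\varepsilon$ hold for all sufficiently small $\varepsilon > 0$.

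I apply Theorem~\ref{5STATth1} pointwise to $I_\delta(2c\sqrt{u_1 u_2}/(1-c^2))$ in the formula for $p_{2,0}$; since $\Psi(\delta, \cdot)$ is monotonically decreasing and the argument is minimised on the integration domain at $u_i = x_i^*/2$ (giving $x_{\min} = c\sqrt{x_1^* x_2^*}/(1-c^2)$), the resulting multiplicative error satisfies $|\theta_1| \le \Psi(\delta, x_{\min})$, and the stated lower bound on $x_1^* x_2^*$ is precisely what makes the hypothesis of Theorem~\ref{5STATth1} hold at $x_{\min}$. Substituting $u_i = (x_i^*/2)t_i^2$ brings the integral into the form $(\text{prefactor}) \cdot \int_1^\infty \int_1^\infty e^{-\lambda Q(t_1, t_2)}(t_1 t_2)^{\delta+1/2}\,dt_1 dt_2$ with $Q(t_1, t_2) := t_1^2 - 2c\rho t_1 t_2 + \rho^2 t_2^2$, and a routine bookkeeping (using $\delta/2 + 3/4 = N/4$, $\Gamma(1+\delta) = \Gamma((N-1)/2)$, and $\lambda c\rho = c\sqrt{x_1^* x_2^*}/(2(1-c^2))$) matches this prefactor with $\frac{(x_1^* x_2^*)^{N/4}}{(2c)^{N/2-1}\Gamma((N-1)/2)\sqrt{\pi(1-c^2)}}$.

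To evaluate $I_2$ I extract $e^{-\lambda Q(1,1)}$ with $Q(1,1) = \rho^2 - 2c\rho + 1$; this is the minimum of $Q$ on $[1,\infty)^2$ because $Q$ is positive-definite (as $c < 1$) and $\partial_1 Q(1,1) = 2(1-c\rho) > 0$, $\partial_2 Q(1,1) = 2\rho(\rho-c) > 0$ under $c < \rho < 1/c$. I split $[1,\infty)^2 = R_0 \cup R_1$ with $R_0 := [1, 1+\varepsilon] \times [1, 1+\varepsilon/\rho]$; the $R_1$-contribution is $\tilde I_4 \ge 0$ by definition. On $R_0$, setting $s_i = t_i - 1$, I write $Q - Q(1,1) = [2(1-c\rho)s_1 + s_1^2] + [2\rho(\rho - c - cs_1)s_2 + \rho^2 s_2^2]$; the condition $\varepsilon < \rho/c - 1$ keeps the effective $s_2$-coefficient $\rho - c - cs_1$ positive, and the polynomial factor $(1+s_1)^{\delta+1/2}(1+s_2)^{\delta+1/2}$ lies in $[1, 1 + \theta_2]$ with the stated bound. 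The core identity is $\int_0^\eta e^{-\lambda(2as+s^2)}(2\lambda a + 2\lambda s)\,ds = 1 - e^{-\lambda\eta(2a+\eta)}$; sandwiching the weight between its endpoint values gives $\int_0^\eta e^{-\lambda(2as+s^2)}\,ds = \frac{(1-\theta')(1-\theta'')}{2\lambda a}$ for some $\theta'' = e^{-\lambda\eta(2a+\eta)}$ and $\theta' \in [0, \eta/(a+\eta)]$. Applying this first to the inner $s_2$-integral (after the substitution $w = \rho s_2$, with $a = \rho-c-cs_1$) and then to the $s_1$-integral (with $a = 1-c\rho$), together with the reduction $(\rho-c-cs_1)^{-1} = (\rho-c)^{-1}(1+\theta_5)$, produces the factors $(1-\theta_3)(1-\theta_4)(1+\theta_5)$ from the $s_2$-stage and $(1-\theta_6)(1-\theta_7)$ from the $s_1$-stage, exactly matching the stated elementary bounds on $\theta_3,\ldots,\theta_7$.

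For the tail, on $R_1$ at least one of $s_1 \ge \varepsilon$ or $s_2 \ge \varepsilon/\rho$ holds, so $Q - Q(1,1) \ge \varepsilon(2\min(\rho-c, 1-c\rho) + \varepsilon)$; writing $e^{-\lambda[Q-Q(1,1)]} = e^{-\lambda\varepsilon(2\min+\varepsilon)} \cdot e^{-\lambda[Q-Q(1,1)-\varepsilon(2\min+\varepsilon)]}$ and invoking $\lambda > 1$ to downgrade the second exponent from base $-\lambda$ to base $-1$ yields the claimed bound $\tilde I_4 \le e^{-\lambda\varepsilon(2\min(\rho-c,1-c\rho)+\varepsilon)} \cdot e^{\rho^2 - 2c\rho + 1 + \varepsilon(2\min(\rho-c,1-c\rho)+\varepsilon)} \cdot I_6$ with $I_6 = \int_{R_1} e^{-Q(t_1,t_2)}(t_1 t_2)^{\delta+1/2}\,dt_1 dt_2$. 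Using the diagonalisations $Q = (t_1 - c\rho t_2)^2 + (1-c^2)\rho^2 t_2^2 = (\rho t_2 - c t_1)^2 + (1-c^2) t_1^2$, one bounds $I_6$ by a sum of two products of the standard one-variable integrals $\int_0^\infty e^{-(1-c^2)t^2} t^{N-2}\,dt = \frac{\Gamma((N-1)/2)}{2(1-c^2)^{(N-1)/2}}$ and $\int_0^\infty e^{-(1-c^2)t^2} t^{N/2-1}\,dt = \frac{\Gamma(N/4)}{2(1-c^2)^{N/4}}$, which after collecting the powers of $c$, $\rho$, and $2$ assemble into the two summands of the stated bound on $I_6$. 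The principal technical obstacle will be the careful tracking of the multiplicative corrections on $R_0$, so that the product $(1+\theta_2)(1-\theta_3)(1-\theta_4)(1+\theta_5)(1-\theta_6)(1-\theta_7)$ emerges with exactly the sign pattern and elementary bounds required; the tail estimate, by contrast, is a routine sequence of one-variable Gaussian-type computations.
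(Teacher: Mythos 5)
Your proposal follows essentially the same route as the paper's proof: apply Theorem~\ref{5STATth1} together with the monotonicity of $\Psi$ at the corner of the integration domain, rescale to the Laplace integral $I_2$, split $[1,\infty)^2$ into the box $[1,1+\varepsilon]\times[1,1+\varepsilon/\rho]$ and its complement, evaluate the box contribution by two nested one-dimensional sandwich arguments (your ``core identity'' is the paper's substitution $w=(v+b_u)^2$ in disguise), and bound the tail by peeling off the exponential factor using $\lambda>1$ and estimating the remaining convergent integral via Gaussian moments. The sign pattern of the $\theta_i$, their elementary bounds, and the constants in the bound for $I_6$ all match the paper's computation.

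There is, however, one genuine gap, in the tail estimate. You assert that on $R_1$ ``at least one of $s_1\ge\varepsilon$ or $s_2\ge\varepsilon/\rho$ holds, so $Q-Q(1,1)\ge\varepsilon(2\min(\rho-c,1-c\rho)+\varepsilon)$.'' This implication does not follow from the displacement of a single coordinate, because the cross term $-2c\rho t_1t_2$ can offset the gain: for fixed $t_2$ the minimum of $Q$ over $t_1\ge 1$ is attained at the \emph{interior} point $t_1=c\rho t_2$ once $t_2\ge\tfrac{1}{c\rho}$, and e.g. the point $(t_1,t_2)=(1,\tfrac{1}{c\rho})$ lies in $R_1$ (since $\varepsilon<\tfrac1c-\rho$) with $Q=\tfrac{1-c^2}{c^2}$, so there $Q-Q(1,1)=\tfrac{1-c^2}{c^2}-(\rho^2-2c\rho+1)$, and whether this exceeds $\varepsilon(2\min(\rho-c,1-c\rho)+\varepsilon)$ is exactly the content of the second hypothesis on $\varepsilon$ --- a hypothesis you verify is satisfiable but never actually invoke. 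The correct argument (as in the paper) computes $\min_{R_1}Q$ by cases: on the strip $t_2\ge\tfrac1{c\rho}$ the minimum is $\tfrac{1-c^2}{c^2}$; on the two remaining boundary pieces it is $Q(1,1)+\varepsilon(2(\rho-c)+\varepsilon)$ and $Q(1,1)+\varepsilon(2(1-c\rho)+\varepsilon)$ respectively; the condition $\varepsilon(2\min(\rho-c,1-c\rho)+\varepsilon)<\tfrac{1-c^2}{c^2}-(\rho^2-2c\rho+1)$ then identifies the overall minimum as $Q(1,1)+\varepsilon(2\min(\rho-c,1-c\rho)+\varepsilon)$. With that case analysis inserted, your argument coincides with the paper's proof.
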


\begin{theorem} \label{5STATth2}
Let  $N \ge 3$, ${x_1^*} \to
+\infty$, ${x_2^*} \to
+\infty$ so that ${\rho} = \sqrt{\frac{{x}_2^*}{{x}_1^*}}$ is fixed and $c<{\rho} < \frac1{c}$. Then
\begin{gather}
\alpha = \frac{(x_1^*)^{\frac{N}2-2} {\(\frac{{\rho}}{2c}\)}^{\frac{N}2-1} \cdot (1-c^2)^{\frac32} \cdot e^{-\frac{x_1^*}{2(1-c^2)}({\rho}^2-2c{\rho}+1)}}{\sqrt{\pi}\Gamma(\frac{N-1}2)({\rho}-c)(1-c{\rho})} \cdot \(1 + O\(\frac{\ln x_1^*}{x_1^*}\)\). \nonumber
\end{gather}
\end{theorem}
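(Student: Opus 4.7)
The plan is to derive Theorem \ref{5STATth2} directly from Proposition \ref{5STATpredl1}. Observe first that after the substitution $\lambda = x_1^*/(2(1-c^2))$ and $(x_1^* x_2^*)^{N/4} = (x_1^*)^{N/2}\rho^{N/2}$, the product of the prefactor in Proposition \ref{5STATpredl1} and the leading term $e^{-\lambda(\rho^2 - 2c\rho + 1)}/\bigl(4\lambda^2\rho(\rho-c)(1-c\rho)\bigr)$ collapses algebraically to exactly the expression displayed as the leading asymptotic in Theorem \ref{5STATth2}. Hence it suffices to show that, for a suitable choice of $\varepsilon = \varepsilon(x_1^*)$, each of the correction factors $(1+\theta_1)$ and $(1\pm\theta_j)$ for $j = 2, \ldots, 7$ equals $1 + O(\ln x_1^*/x_1^*)$ and that $\tilde I_4$ contributes only a relative error of the same order.

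I would take $\varepsilon := A\ln x_1^*/x_1^*$ with a positive constant $A$ depending only on $c$, $\rho$, $N$, whose precise value is fixed at the end. One first checks the hypotheses of Proposition \ref{5STATpredl1}. For large $x_1^*$ the inequality $\lambda > 1$ and the explicit lower bound on $x_1^* x_2^*$ hold automatically, and the condition $c < \rho < 1/c$ is assumed. It remains to verify the two smallness conditions on $\varepsilon$: the first is immediate from $\varepsilon \to 0$, and the right-hand side of the second equals $(1/c - \rho)(1/c + \rho - 2c)$, which is a strictly positive constant under $c < \rho < 1/c$, so the second condition also holds eventually.

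The correction factors split into two groups. The first group ($\theta_2$, $\theta_3$, $\theta_5$, $\theta_6$) is polynomially controlled: each bound is linear in $\varepsilon$ with coefficients that remain bounded, so each is $O(\varepsilon) = O(\ln x_1^*/x_1^*)$. The factor $\theta_1$ is handled by Theorem \ref{5STATth1}: the bound $|\theta_1| \le \Psi\bigl(\tfrac{N-3}{2}, c\rho x_1^*/(1-c^2)\bigr)$ gives $\theta_1 = O(1/x_1^*)$, since the argument of $\Psi$ grows linearly in $x_1^*$ and $\Psi(\nu, x)$ decays like $|4\nu^2 - 1|/(8x)$ for large $x$. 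The second group ($\theta_4$, $\theta_7$, and $\tilde I_4$) is exponentially controlled: each bound contains a factor of the form $\exp(-\lambda\varepsilon \cdot 2\min(\rho-c, 1-c\rho)) = (x_1^*)^{-A\min(\rho-c,1-c\rho)/(1-c^2)}$. The residual $\tilde I_4$ must be compared to the leading term $1/\bigl(4\lambda^2 \rho(\rho-c)(1-c\rho)\bigr) \sim (x_1^*)^{-2}$, while the uniform bound on $I_6$ is a harmless $O(1)$ constant. Choosing $A$ so that $A\min(\rho-c,1-c\rho)/(1-c^2) \ge 3$ guarantees that $\tilde I_4$ contributes a relative error of order $1/x_1^*$, which is absorbed into $O(\ln x_1^*/x_1^*)$.

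The main obstacle is the tension in the choice of $\varepsilon$: the polynomial corrections force $\varepsilon \to 0$, whereas the exponential corrections require $\lambda\varepsilon \to \infty$. The logarithmic scale $\varepsilon \asymp \ln x_1^*/x_1^*$ is the natural regime in which both requirements hold simultaneously, and this is precisely what produces the $\ln x_1^*/x_1^*$ term in the final statement. Beyond balancing these two scales, the rest of the proof is routine multiplicative bookkeeping of the estimates already supplied by Proposition \ref{5STATpredl1} and Theorem \ref{5STATth1}.
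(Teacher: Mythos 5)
Your proposal is correct and follows essentially the same route as the paper: the paper likewise obtains the leading term by direct algebraic simplification of Proposition \ref{5STATpredl1} and then sets $\varepsilon(\lambda)=\frac{a\ln\lambda}{\lambda}$ with $a$ fixed sufficiently large, so that $\theta_i=O(\varepsilon)$ for $i=2,3,5,6$, $\theta_i=O(\frac{1}{\lambda})$ for $i=4,7$, $\tilde I_4=O(\frac{1}{\lambda^3})$, and $\theta_1=O(\frac{1}{x_1^*})$ via $\Psi(\nu,x)=O(\frac{1}{x})$. Your explicit identification of the balancing tension and the verification of the $\varepsilon$-admissibility conditions are slightly more detailed than the paper's, but the argument is the same.
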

Further, the following theorem holds true.
\begin{theorem} \label{5STATth3}
Suppose that $N \ge 3$, $\alpha_1:=\lim_{n_1 \to \infty}P(X(n_1)>x_1^*)$, $\alpha_2:=\lim_{n_2 \to \infty}P(X(n_2)>x_2^*)$  and, as before, $\alpha = \lim_{n_i \to \infty, \frac{n_1}{n_2} \to c^2}P(X(n_1)>x_1^*, X(n_2)>x_2^*)$. Suppose that $x_1^* \to +\infty$ and  $x_2^* \to +\infty$ so that $\sqrt{\frac{\ln \alpha_2}{\ln \alpha_1}} = const =:P$. If  $c < P <\frac1{c}$ then
\begin{gather}
\alpha \sim \frac{(1-c^2)^{\frac{3}2}\cdot P^{\frac{N}2-1}\cdot (- \ln \alpha_1)^{\frac{N}2-2} \cdot Q^{-\frac1{2(1-c^2)}}}{2c^{\frac{N}2-1}\sqrt{\pi}\Gamma\(\frac{N-1}2\)(P-c)(1-c P)}    \nonumber
\end{gather}
where
\begin{gather}
Q=\frac{P^{2(N-3)(1-\frac{c}{P})} }{ \(\Gamma\(\frac{N-1}2\) \)^{4(1-Pc)} } \cdot (-\ln \alpha_1)^{(N-3)(2-c(P+P^{-1}))} \cdot  \alpha_1^{-2(P^2-2Pc+1)} .
 \nonumber
\end{gather}
\end{theorem}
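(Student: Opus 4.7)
The plan is to deduce Theorem \ref{5STATth3} from Theorem \ref{5STATth2} by converting the parametrization $(x_1^*, x_2^*)$ into $(\alpha_1, P)$ via the classical upper-tail asymptotic of the chi-square distribution. Since $X(n_i)\Rightarrow \chi^2_{N-1}$ as $n_i\to\infty$, we have $\alpha_i = P(\chi^2_{N-1} > x_i^*)$, so the standard incomplete Gamma asymptotic gives $\alpha_i = (x_i^*/2)^{(N-3)/2}e^{-x_i^*/2}/\Gamma((N-1)/2)\cdot(1+O(1/x_i^*))$. Writing $L_i := -\ln\alpha_i$, taking logarithms and bootstrapping once I would obtain the uniform inversion
\[
x_i^* = 2L_i + (N-3)\ln L_i - 2\ln\Gamma((N-1)/2) + o(1), \qquad L_i\to\infty.
\]
By hypothesis $L_2 = P^2 L_1$, hence $\rho = \sqrt{x_2^*/x_1^*} = P + O(\ln L_1/L_1) \to P$; as $c<P<1/c$, the inequalities $c<\rho<1/c$ required by Theorem \ref{5STATth2} hold for all sufficiently large $x_1^*$.

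All factors in the formula of Theorem \ref{5STATth2} other than the exponential depend continuously on $\rho$ and are bounded away from zero near $\rho=P$, so replacing $\rho$ by $P$ and $x_1^*$ by $2L_1$ in the algebraic prefactor reduces it to $(1-c^2)^{3/2}P^{N/2-1}(-\ln\alpha_1)^{N/2-2}/[2c^{N/2-1}\sqrt{\pi}\,\Gamma((N-1)/2)(P-c)(1-cP)]$ up to a factor $(1+o(1))$. For the exponential I use the exact identity $x_1^*(\rho^2-2c\rho+1) = x_1^*+x_2^*-2c\sqrt{x_1^* x_2^*}$ and Taylor-expand $\sqrt{x_1^* x_2^*}$ to precision $o(1)$. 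Substituting the expansion of $x_i^*$ and collecting terms, the exponent becomes
\[
-\frac{L_1(P^2-2cP+1)}{1-c^2} - \frac{(N-3)A\ln L_1}{2(1-c^2)} - \frac{(N-3)(1-c/P)\ln P}{1-c^2} + \frac{A\ln\Gamma((N-1)/2)}{1-c^2} + o(1),
\]
where $A := 2-c(P+P^{-1})$. Re-expressing $L_1 = -\ln\alpha_1$ and exponentiating matches this with $Q^{-1/(2(1-c^2))}$ as in the statement.

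The main obstacle is the bookkeeping in the exponential expansion: the correction $(N-3)\ln L_i$ in $x_i^*$ enters $\sqrt{x_1^* x_2^*}$ as the cross-term $\tfrac{1}{2}(N-3)(P+P^{-1})\ln L_1$ (together with constants involving $\ln P$ and $\ln\Gamma((N-1)/2)$), and only after combining $x_1^*+x_2^*$ with $-2c\sqrt{x_1^* x_2^*}$ does the clean combination $A$ emerge, producing the factor $(-\ln\alpha_1)^{(N-3)A}$ in $Q$; the companion constant term gives rise to the $\Gamma((N-1)/2)$-exponent in $Q$. All remaining perturbations — the $O(1/x_i^*)$ from the incomplete Gamma remainder, the $O(\ln L_1/L_1)$ from replacing $\rho$ by $P$ in the prefactor, and the $O(\ln x_1^*/x_1^*)$ of Theorem \ref{5STATth2} — are negligible compared with the leading term and are absorbed into the $\sim$-asymptotic.
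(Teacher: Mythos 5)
Your route is essentially the paper's: invert the $\chi^2_{N-1}$ tail to get $x_i^* = -2\ln\alpha_i + (N-3)\ln(-\ln\alpha_i) - 2\ln\Gamma(\tfrac{N-1}{2}) + o(1)$, observe $\rho\to P$, keep the exponent in the exact form $x_1^*+x_2^*-2c\sqrt{x_1^*x_2^*}$, and expand $\sqrt{x_1^*x_2^*}$ to additive precision $o(1)$. The paper packages that last expansion as a separate statement (Lemma \ref{5STATlemma2}) applied to $\tilde\alpha_i := \Gamma(\tfrac{N-1}{2})\alpha_i$, and it works from the explicit bounds obtained in the proof of Proposition \ref{5STATpredl1} rather than from Theorem \ref{5STATth2} itself --- a sensible precaution, since Theorem \ref{5STATth2} is stated for \emph{fixed} $\rho$ while here $\rho$ only tends to $P$; your use of Theorem \ref{5STATth2} implicitly needs its error term to be uniform in $\rho$ on a compact subinterval of $(c,1/c)$, which does follow from Proposition \ref{5STATpredl1} but should be stated.

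The one step you must re-examine is the final ``matching'': the exponent you display does \emph{not} exponentiate to the stated $Q^{-1/(2(1-c^2))}$. Your $\ln\Gamma(\tfrac{N-1}{2})$-term has coefficient $A/(1-c^2)$ with $A=2-c(P+P^{-1})$, i.e.\ it corresponds to a factor $\bigl(\Gamma(\tfrac{N-1}{2})\bigr)^{-2(2-c(P+P^{-1}))}$ inside $Q$, whereas the statement has $\bigl(\Gamma(\tfrac{N-1}{2})\bigr)^{-4(1-Pc)}$; these coincide only when $P=1$ or $\Gamma(\tfrac{N-1}{2})=1$. Your direct expansion is the reliable one: the constant $-2\ln\Gamma(\tfrac{N-1}{2})$ in $x_1^*$ enters $\sqrt{x_1^*x_2^*}$ with weight $P/2$ and the one in $x_2^*$ with weight $1/(2P)$, giving $-(P+P^{-1})\ln\Gamma(\tfrac{N-1}{2})$ rather than $-2P\ln\Gamma(\tfrac{N-1}{2})$. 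The paper's exponent $4(1-Pc)$ comes from applying Lemma \ref{5STATlemma2} to $\tilde\alpha_i$ with the constant $P=\sqrt{\ln\alpha_2/\ln\alpha_1}$, although the lemma's hypothesis concerns $\sqrt{\ln\tilde\alpha_2/\ln\tilde\alpha_1}=P+O(1/\ln\alpha_1)$, and that $O(1/\ln\alpha_1)$ perturbation, multiplied by $-\ln\tilde\alpha_1$ in the exponent, contributes exactly the bounded factor in question. The discrepancy is a constant multiple, so it is invisible at $P=1$ (Corollary \ref{5STATsled1}) and at the level of logarithmic asymptotics, but it does affect the claimed equivalence. So either you repeated the same slip when asserting the match, or you did not actually perform it; in either case carry out the last comparison explicitly --- your own intermediate formula is the arbiter.
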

\begin{corollary}  \label{5STATsled1}
If the condititions of the previous theorem holds true and $\alpha_2 = \alpha_1$ (i.e. $P=1$) then
\begin{gather}
\alpha \sim
\frac{(1-c^2)^{\frac32}{\(\Gamma(\frac{N-1}2)\)}^{\frac{1-c}{1+c}}}{2c^{\frac{N}2-1}\sqrt{\pi}(1-c)^2} \cdot
\(- \ln \alpha_1\)^{\frac{N}2 - \frac{N-3}{1+c}-2} \(\alpha_1 \)^{\frac{2}{1+c}}
\nonumber
\end{gather}
where $\frac{2}{1+c} \in (1,2)$ since $c \in (0,1)$.
\end{corollary}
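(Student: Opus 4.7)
The plan is to derive this corollary by direct substitution of $P=1$ into the asymptotic formula of Theorem \ref{5STATth3}, followed by careful bookkeeping of exponents. First I would substitute $P=1$ into the ``main'' prefactor in Theorem \ref{5STATth3}: the factor $P^{N/2-1}$ becomes $1$, and the denominator $(P-c)(1-cP)$ becomes $(1-c)^2$, which already produces the denominator appearing in the corollary and an explicit $(-\ln\alpha_1)^{N/2-2}$ factor that I will later combine with the one coming from $Q$.

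Next I would simplify $Q$ at $P=1$. The exponent $2(N-3)(1-c/P)$ becomes $2(N-3)(1-c)$, so $P^{2(N-3)(1-c/P)} = 1$; the denominator exponent $4(1-Pc)$ becomes $4(1-c)$; the exponent $(N-3)(2-c(P+P^{-1}))$ of $-\ln\alpha_1$ becomes $2(N-3)(1-c)$; and $-2(P^2-2Pc+1)$ becomes $-4(1-c)$. Thus
\begin{gather}
Q \;=\; \bigl(\Gamma((N-1)/2)\bigr)^{-4(1-c)}\cdot(-\ln\alpha_1)^{2(N-3)(1-c)}\cdot \alpha_1^{-4(1-c)}.\nonumber
\end{gather}
Raising $Q$ to the power $-1/(2(1-c^2)) = -1/(2(1-c)(1+c))$ produces a cancellation of the factor $1-c$ in each exponent, yielding $(\Gamma((N-1)/2))^{2/(1+c)}$, $(-\ln\alpha_1)^{-(N-3)/(1+c)}$, and $\alpha_1^{2/(1+c)}$.

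Finally I would multiply everything together: combining $(\Gamma((N-1)/2))^{2/(1+c)}$ with the explicit $1/\Gamma((N-1)/2)$ in the prefactor gives $(\Gamma((N-1)/2))^{(1-c)/(1+c)}$, and combining $(-\ln\alpha_1)^{N/2-2}$ with $(-\ln\alpha_1)^{-(N-3)/(1+c)}$ gives $(-\ln\alpha_1)^{N/2 - (N-3)/(1+c) - 2}$, which is precisely the expression in the corollary. The concluding remark $2/(1+c) \in (1,2)$ for $c \in (0,1)$ is an immediate observation, since the map $c \mapsto 2/(1+c)$ is strictly decreasing on $(0,1)$ with endpoint values $2$ and $1$. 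The proof is purely algebraic substitution, so there is no real obstacle beyond keeping track of the various exponents; the only step worth double-checking is the cancellation of $(1-c)$ factors when $Q$ is raised to the power $-1/(2(1-c^2))$, which is what makes the final exponents rational functions of $c$ with denominator $1+c$.
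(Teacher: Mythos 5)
Your proposal is correct and follows essentially the same route as the paper: substitute $P=1$ into the formula of Theorem \ref{5STATth3}, note that $Q$ collapses to $\bigl(\Gamma(\tfrac{N-1}2)\bigr)^{-4(1-c)}(-\ln\alpha_1)^{2(N-3)(1-c)}\alpha_1^{-4(1-c)}$, and cancel the common factor $1-c$ against $2(1-c^2)$ when raising $Q$ to the power $-\tfrac{1}{2(1-c^2)}$. All the exponent bookkeeping in your write-up checks out against the paper's computation.
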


\begin{remark}
Note that (under the conditions of Corollary \ref{5STATsled1}, when $\alpha_1=\alpha_2$ and hence  $x_1^* = x_2^*$) if $c$ <<is close>> to 1, which corresponds to the case when $n_1$ <<is close>> to $n_2$ and sample of size $n_2$ <<does not differ much>> from a sample of size $n_1$, then it is natural to expect that the value  $P(\chi_1^2 > x_1^*, \chi_2^2 > x_2^*| H)$ is <<close>> to $\alpha_1$  in order.  In the case when $c$ <<is close>> to 0, which corresponds to the case when $n_1$ <<differs a lot>> from $n_2$ and the sample of size $n_2$ <<differs a lot>> from the sample of size $n_1$, it is natural to expect (according to <<almost independence>> of $\chi_1^2$ and $\chi_2^2$) that the value  $P(\chi_1^2 > x_1^*, \chi_2^2 > x_2^*| H)$ is <<close>> to $\alpha_1^2$ in order. These observations fit well with the formula from Corollary \ref{5STATsled1}.
\end{remark}

Finally, the following theorem holds true.
\begin{theorem} \label{5STATth4}
Suppose $d \ge 2$ and $Bes_d(t)$ is $d$-dimensional Bessel process, i.e. euclidean norm of the $d$-dimensional Brownian motion. Suppose $0 < s_1 < s_2$ and $x_1, x_2\to +\infty$ so that $\sqrt{\frac{s_1}{s_2}}\frac{x_2}{x_1} = \rho = const$ and $1 < \frac{x_2}{x_1} < \frac{s_2}{s_1}$. Then for any function $g(x)$ such that $g(x) \to + \infty$ as $x \to +\infty$ we have
\begin{gather}
P(Bes_{d}(s_1) \ge x_1, Bes_{d}(s_2) \ge x_2) = \nonumber \\
 =\frac{(x_1^*)^{\frac{d-3}2} {\(\frac{{\rho}}{2c}\)}^{\frac{d-1}2} \cdot (1-c^2)^{\frac32} \cdot e^{-\frac{x_1^*}{2(1-c^2)}({\rho}^2-2c{\rho}+1)}}{\sqrt{\pi}\Gamma(\frac{d}2)({\rho}-c)(1-c{\rho})} \cdot \(1 + O\(\frac{\ln x_1^*}{x_1^*}\)\) \nonumber
\end{gather}
where $x_1^* :=\frac{x_1^2}{s_1}$ and $c:=\sqrt{\frac{s_1}{s_2}}$.
\end{theorem}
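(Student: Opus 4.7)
The plan is to reduce Theorem \ref{5STATth4} to Theorem \ref{5STATth2} by establishing the \emph{exact} equality
\begin{gather}
P(Bes_d(s_1) \ge x_1, Bes_d(s_2) \ge x_2) = \alpha\!\left(\frac{x_1^2}{s_1},\frac{x_2^2}{s_2}\right) \nonumber
\end{gather}
for all $x_1,x_2>0$, under the correspondence $N=d+1$ and $c=\sqrt{s_1/s_2}$. Once this identity is in hand, the conclusion of Theorem \ref{5STATth4} is literally the asymptotic from Theorem \ref{5STATth2} rewritten, since $N/2-2=(d-3)/2$, $N/2-1=(d-1)/2$, and $\Gamma((N-1)/2)=\Gamma(d/2)$.

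To obtain this identity, I would compute the joint density of $(u_1,u_2):=(Bes_d^2(s_1)/(2s_1), Bes_d^2(s_2)/(2s_2))$ via the Markov property. The density of $Bes_d(s_1)$ at $r_1$ is proportional to $r_1^{d-1}e^{-r_1^2/(2s_1)}$, and the standard transition density from $r_1$ at time $s_1$ to $r_2$ at time $s_2$ is $\frac{r_2}{s_2-s_1}(r_2/r_1)^{d/2-1} e^{-(r_1^2+r_2^2)/(2(s_2-s_1))} I_{d/2-1}\!\left(\frac{r_1 r_2}{s_2-s_1}\right)$. After the substitution $u_i = r_i^2/(2s_i)$ (Jacobian $s_1 s_2/(r_1 r_2)$), the exponent collapses to $-(u_1+u_2)/\beta$ thanks to $s_2/(s_2-s_1)=1/\beta$, the argument of $I_{d/2-1}$ becomes $2c\sqrt{u_1 u_2}/\beta$ via $\sqrt{s_1 s_2}/(s_2-s_1)=c/\beta$, and the prefactor simplifies to $(u_1 u_2)^{\delta/2}/(c^\delta\beta\Gamma(1+\delta))$ with $\delta=(d-2)/2$. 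A term-by-term comparison with $p_{2,0}$ from Proposition \ref{5STATpredl1} (applied with $N=d+1$, so that $K_2=1/\beta$) shows the two densities coincide exactly; integrating over $\{u_1 \ge x_1^*/2\}\times\{u_2\ge x_2^*/2\}$, which is the same event as $\{Bes_d(s_1)\ge x_1\}\cap\{Bes_d(s_2)\ge x_2\}$, then yields the claimed identity.

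It remains to check the hypotheses of Theorem \ref{5STATth2}: $d \ge 2$ gives $N=d+1 \ge 3$; writing $\rho=\sqrt{x_2^*/x_1^*}=c\cdot(x_2/x_1)$ and using $1 < x_2/x_1 < s_2/s_1 = 1/c^2$ yields $c<\rho<1/c$ with $\rho$ fixed; and $x_i \to \infty$ with $s_i$ fixed is equivalent to $x_i^*\to\infty$ with $\rho$ fixed. Substituting $N=d+1$ and $x_i^* = x_i^2/s_i$ into the right-hand side of Theorem \ref{5STATth2} reproduces verbatim the formula claimed in Theorem \ref{5STATth4}.

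The only nontrivial step is the algebraic bookkeeping in the density-matching calculation, in which three separate pieces (powers of $c$ and $\beta$ in the prefactor, the exponent, and the argument of $I_\delta$) must each simplify to the corresponding piece of $p_{2,0}$ after the change of variables. No new analytic input is required beyond that—every asymptotic estimate is inherited from Theorem \ref{5STATth2}, which in turn is powered by Proposition \ref{5STATpredl1} and Theorem \ref{5STATth1}.
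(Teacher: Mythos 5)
Your proposal is correct, and it reaches the key identity $P(Bes_d(s_1)\ge x_1,\,Bes_d(s_2)\ge x_2)=\alpha(x_1^2/s_1,\,x_2^2/s_2)$ by a genuinely different route than the paper. The paper does not compute any density: it quotes a convergence theorem from the author's earlier work (Theorem 2 and formula (11) of the reference on convergence of Pearson statistics to the squared normalized Bessel process), which directly identifies the Bessel tail probability with $\lim_n P(X(\lfloor nt_1\rfloor)\ge x_1^2/t_1,\,X(\lfloor nt_2\rfloor)\ge x_2^2/t_2)=\alpha(\cdot,\cdot)$ for $0<t_1<t_2\le 1$, and then uses the scaling relation $Bes_d(ct)\overset{d}{=}\sqrt{c}\,Bes_d(t)$ to reduce general $0<s_1<s_2$ to that normalized time window. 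You instead verify directly, via the Markov property and the standard Bessel transition density, that the joint density of $\bigl(Bes_d^2(s_1)/(2s_1),\,Bes_d^2(s_2)/(2s_2)\bigr)$ coincides with $p_{2,0}$ for $N=d+1$, $c=\sqrt{s_1/s_2}$; I checked the bookkeeping (the exponent collapses to $-(u_1+u_2)/\beta$, the Infeld argument becomes $2c\sqrt{u_1u_2}/\beta$ with index $\delta=(d-2)/2=(N-3)/2$, and the prefactor reduces to $(u_1u_2)^{\delta/2}/(c^\delta\beta\Gamma(1+\delta))$, which is exactly $p_{2,0}$ after substituting $K_2=1/\beta$) and it goes through. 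Your version is self-contained and dispenses with both the external citation and the scaling reduction, at the cost of importing the Bessel transition density; the paper's version reuses prior work and keeps the statistical interpretation in view. The hypothesis translation ($c<\rho<1/c$ from $1<x_2/x_1<s_2/s_1$) and the final invocation of Theorem \ref{5STATth2} are identical in both arguments.
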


\section{Proof of Theorem \ref{5STATth1} }
We need the following representation of the Bessel function $J_{\nu}(z)$ (
see \cite{lebedev})  via Bessel functions of the third kind. Let $z \in { \mathbb{C} \setminus \{ z: ( \mathrm{Im}(z) = 0) \& (\mathrm{Re}(z) \le 0) \} }  $. Then $J_{\nu} (z) = \frac{H_{\nu}^{(1)} (z) + H_{\nu}^{(2)}(z) }2$ (see \cite{lebedev}).
Further, define
\begin{gather}(\nu, m): = \frac{(4 {\nu}^2-1)\cdot(4 {\nu}^2 - 3^2)\cdots (4 \nu^2 - (2m-1)^2)}{2^{2m} \cdot m!} = \frac{\Gamma(\nu+m+\frac12)}{m!\Gamma(\nu-m+\frac12)}. \nonumber
\end{gather}
By definition (see \cite{watson}) we put
\begin{eqnarray}
\sum {}_{\nu}^{(1)} (z;p):= \sum_{m=0}^{p-1} \frac{(-1)^m \cdot (\nu, m)}{(2iz)^m}, \nonumber\\
\sum {}_{\nu}^{(2)} (z;p):= \sum_{m=0}^{p-1} \frac{(\nu, m)}{(2iz)^m}. \nonumber
\end{eqnarray}
In addition, let now $\nu > \frac12, \mathrm{Re}(z) \ge 0, r:=|z|$ and let $2r \ge \nu - \frac12$. Define $G:=\( 1- \frac{\nu-\frac12}{2r} \)^{-\nu-\frac12}$. The Weber's formulas for the remainders in expansions of Bessel functions of the third kind (for $p \ge 1$) have the following form (see \cite{watson}):
\begin{eqnarray}
H_{\nu}^{(1)} (z)  = \( \frac{2}{\pi z} \)^{\frac12} \cdot e^{i(z-\frac{\pi \nu}{2} - \frac{\pi}4 )}  \cdot \(\sum {}_{\nu}^{(1)} (z;p) + R_p^{(1)} \),  \nonumber \\
H_{\nu}^{(2)} (z) =\( \frac{2}{\pi z} \)^{\frac12} \cdot e^{-i(z-\frac{\pi \nu}{2} - \frac{\pi}4 )}  \cdot \(\sum {}_{\nu}^{(2)} (z;p) + R_p^{(2)} \)  \nonumber
\end{eqnarray}
and the branch of the function $z^{\frac12}$ is chosen in accordance with the condition  $Re(z) \ge 0$ (i.e. $\arg(z) \in [-\frac{\pi}2, \frac{\pi}2]$) and in this case
\begin{eqnarray}
|R_p^{(1)}|, |R_p^{(2)}|  \le 2\cdot G^2 \cdot |(\nu, p)| \cdot \frac{\Gamma(\frac12) \cdot \Gamma(\frac{p}2 + 1)}{\Gamma(\frac{p+1}2) \cdot |(2z)^p|}. \nonumber
\end{eqnarray}

Further, for $-\pi < \arg(z) < \frac{\pi}2$ we have $I_{\nu}(z) = e^{- \frac{i \pi \nu}{2}} \cdot J_{\nu}(z \cdot e^{\frac{i \pi}2})$ (see \cite{lebedev}). We need only the case $x >0$, $\nu \ge 0$. Starting from the formulas presented above we obtain convenient expressions for the remainder of the series in the expansion of the function $I_{\nu}(x)$  in powers of $x$. Everywhere in what follows we assume that $x >0$, $2x > \nu - \frac12$, $\nu \ge 0$. Consistently expressing $I_{\nu}(z)$ through $J_{\nu}(z)$ and $J_{\nu}(z)$ through $H_{\nu}^{(1)} (z)$ and $H_{\nu}^{(2)} (z)$ we obtain the following relations:
\begin{gather}
I_{\nu}(x) = e^{- \frac{i \pi \nu}{2}} \cdot J_{\nu}(ix) = \nonumber \\
=e^{- \frac{i \pi \nu}{2}} \cdot \frac12 \cdot \( \frac{2}{\pi i x} \)^{\frac12}  \left(
 e^{i(ix-\frac{\pi \nu}{2} - \frac{\pi}4 )}  \(\sum {}_{\nu}^{(1)} (ix;p) + \tilde{R}_p^{(1)} \) +
e^{-i(ix-\frac{\pi \nu}{2} - \frac{\pi}4 )} \(\sum {}_{\nu}^{(2)} (ix;p) + \tilde{R}_p^{(2)} \)
 \right) \nonumber
 \end{gather}
where $(\frac1{i})^{\frac12} = e^{-\frac{i \pi}4}$. Since $r = |ix| = x$ we have: $G=\( 1- \frac{\nu-\frac12}{2x} \)^{-\nu-\frac12}$, \begin{eqnarray}
|\tilde{R}_p^{(1)}|, |\tilde{R}_p^{(2)}|  \le 2\cdot G^2 \cdot |(\nu, p)| \cdot \frac{\Gamma(\frac12) \cdot \Gamma(\frac{p}2 + 1)}{\Gamma(\frac{p+1}2) \cdot |(2 x)^p|}. \nonumber
\end{eqnarray} Hence, we obtain that
\begin{gather}
I_{\nu}(x) =  e^{- \frac{i \pi \nu}{2}} e^{-\frac{i \pi}4} \frac{1}{\sqrt{2 \pi x}}  \cdot D \nonumber \\ \text{where \ \ }
D :=
e^{-x} \cdot e^{-i (\frac{\pi \nu}2 + \frac{\pi}4)}\(\sum {}_{\nu}^{(1)} (ix;p) + \tilde{R}_p^{(1)} \) + e^{x} \cdot e^{i (\frac{\pi \nu }2 + \frac{\pi}4 )} \(\sum {}_{\nu}^{(2)} (ix;p) + \tilde{R}_p^{(2)} \) . \nonumber
\end{gather}
Denote $A_p: = \sum {}_{\nu}^{(1)} (ix;p)$, $B_p: = \sum {}_{\nu}^{(2)} (ix;p)$. Thus
$A_p = \sum_{m=0}^{p-1} \frac{(-1)^m \cdot (\nu, m)}{(-2x)^m} = \sum_{m=0}^{p-1} \frac{(\nu, m)}{(2x)^m}$,  $B_p = \sum_{m=0}^{p-1} \frac{(\nu, m)}{(-2x)^m} = \sum_{m=0}^{p-1} \frac{(-1)^m \cdot (\nu, m)}{(2x)^m}$. It follows that
\begin{gather}
I_{\nu}(x) = \frac{1}{\sqrt{2 \pi x}} \cdot \left(  e^{-x} \cdot e^{-i (\pi \nu + \frac{\pi}2 )}  \cdot (A_p + \tilde{R}_p^{(1)})  +  e^{x} \cdot (B_p + \tilde{R}_p^{(2)}) \right)
. \nonumber
\end{gather}
Thus, we have proved the following proposition.
\begin{proposition} \label{5STATpredl2}
Let $2x > \nu - \frac12>0$. Let $A_p = \sum_{m=0}^{p-1} \frac{(\nu, m)}{(2x)^m}$,  $B_p = \sum_{m=0}^{p-1} \frac{(-1)^m \cdot (\nu, m)}{(2x)^m}$, $G = \( 1- \frac{\nu-\frac12}{2x} \)^{-\nu-\frac12}$,  $(\nu, m) = \frac{(4 {\nu}^2-1)\cdot(4 {\nu}^2 - 3^2)\cdots (4 \nu^2 - (2m-1)^2)}{2^{2m} \cdot m!}$. Then the following relations hold:
\begin{gather}
I_{\nu}(x) = \frac{1}{\sqrt{2 \pi x}} \cdot \left(  e^{-x} \cdot e^{-i (\pi \nu + \frac{\pi}2 )}  \cdot (A_p + \tilde{R}_p^{(1)})  +  e^{x} \cdot (B_p + \tilde{R}_p^{(2)}) \right), \nonumber \\
|\tilde{R}_p^{(1)}|, |\tilde{R}_p^{(2)}|   \le 2\cdot G^2 \cdot |(\nu, p)| \cdot \frac{\Gamma(\frac12) \cdot \Gamma(\frac{p}2 + 1)}{\Gamma(\frac{p+1}2) \cdot |(2 x)^p|}. \nonumber
\end{gather}
\end{proposition}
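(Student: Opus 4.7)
The plan is to derive the representation by going through the chain $I_\nu \to J_\nu \to (H^{(1)}_\nu, H^{(2)}_\nu)$, applying Weber's asymptotic formulas with remainder bounds to each Hankel function, and then carefully tracking the complex exponential factors that arise when we specialize to the imaginary axis $z = ix$.

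First I would start from the identity $I_\nu(x) = e^{-i\pi\nu/2} J_\nu(ix)$, which is valid since $\arg(ix) = \pi/2$ lies in the permitted range $(-\pi, \pi/2]$. Using $J_\nu = (H^{(1)}_\nu + H^{(2)}_\nu)/2$, I would substitute the Weber expansions cited just above the proposition, noting that $|ix| = x$ so the condition $2x > \nu - 1/2$ in Weber's formula matches the hypothesis. The principal branch of $(ix)^{1/2}$ must be chosen consistently with $\mathrm{Re}(ix) \ge 0$; here $(1/i)^{1/2} = e^{-i\pi/4}$, which pulls out a global phase and $(2/(\pi x))^{1/2}$ in modulus.

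Next I would collect the exponential factors. In $H^{(1)}_\nu(ix)$, the factor $e^{i(ix - \pi\nu/2 - \pi/4)}$ becomes $e^{-x} \cdot e^{-i(\pi\nu/2 + \pi/4)}$, while in $H^{(2)}_\nu(ix)$ the factor $e^{-i(ix - \pi\nu/2 - \pi/4)}$ becomes $e^{x} \cdot e^{i(\pi\nu/2 + \pi/4)}$. Multiplying by the overall $e^{-i\pi\nu/2} e^{-i\pi/4}$ prefactor, the phase on the $e^{x}$ piece cancels to $1$, and the phase on the $e^{-x}$ piece becomes $e^{-i(\pi\nu + \pi/2)}$. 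Simultaneously, the finite sums in the Weber expansion simplify: substituting $z = ix$ gives $(2iz)^m = (-2x)^m$, so $\Sigma^{(1)}_\nu(ix;p) = \sum_m (-1)^m(\nu,m)/(-2x)^m = A_p$ and $\Sigma^{(2)}_\nu(ix;p) = \sum_m (\nu,m)/(-2x)^m = B_p$, matching the definitions in the statement.

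Finally I would observe that the Weber remainder estimate for $H^{(1)}_\nu(z), H^{(2)}_\nu(z)$ involves $r = |z|$ and $G = (1 - (\nu - 1/2)/(2r))^{-\nu - 1/2}$; with $z = ix$ we have $r = x$, so the bound on $|\tilde R_p^{(1)}|, |\tilde R_p^{(2)}|$ is precisely the one stated. The main obstacle here is really just bookkeeping: keeping the branch of $z^{1/2}$, the sign in $(2iz)^m = (-2x)^m$, and the combination of the three phase factors $e^{-i\pi\nu/2}$, $e^{-i\pi/4}$, $e^{\pm i(\pi\nu/2 + \pi/4)}$ all consistent. Since no new analytic input is needed beyond Weber's formulas, the proof reduces to this algebraic verification and then reading off the remainder bound.
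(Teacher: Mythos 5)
Your proposal is correct and follows essentially the same route as the paper: expressing $I_{\nu}(x)=e^{-i\pi\nu/2}J_{\nu}(ix)$, splitting $J_{\nu}$ into the two Hankel functions, inserting Weber's expansions with their remainder bounds, and carrying out the same phase and branch bookkeeping (with the $e^{x}$ phase cancelling and the $e^{-x}$ phase collapsing to $e^{-i(\pi\nu+\pi/2)}$). The identifications $\Sigma^{(1)}_{\nu}(ix;p)=A_p$, $\Sigma^{(2)}_{\nu}(ix;p)=B_p$ and the transfer of the remainder bound via $r=|ix|=x$ match the paper's argument exactly.
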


\begin{remark} \label{5STATzame4oBessel}

Note that in the case $\nu \in [0, \frac12)$ the same formulas will be fulfilled if we denote by $G$ an
expression $\biggl( 1 - \frac{\nu+\frac32}{2r}\biggr)^{-\nu-\frac32} \cdot (1+ \frac{2\nu+2}{r})$ and require that $2r > \nu + \frac32$ (see \cite{watson}).  If $\nu = \frac12$ then (see \cite{lebedev})   $I_{\frac12}(x) =\bigl(\frac{2}{\pi x}\bigr)^{\frac12}\cdot {\rm{sh}}  x$, which makes it easy to get similar formulas in this case.  Finally, if $\nu < 0$ then the recurrent formula $I_{\nu-1}(z)-I_{\nu+1}(z)=\frac{2\nu}{z}I_{\nu}(z)$  (see \cite{watson}) allows us to find the asymptotics of $I_{\nu}(z)$ for negative $\nu$ by reducing the finding of this asymptotics to the case $\nu \ge 0$,  which is analyzed above.

\end{remark}

In what follows we use only the first two terms ($p=2$) of the expansion from Proposition \ref{5STATpredl2}. However, the arguments are completely analogous in the case of $p >2$. Now we pass directly to the proof of Theorem  \ref{5STATth1}.  We carry out the proof for the case $\nu > \frac12$ (by Remark \ref{5STATzame4oBessel} the case $0 \le \nu < \frac12$ is completely analogous and the case $\nu = \frac12$  is trivial).
In fact, for  $p=2$ we obtain that $A_2 = 1 + \frac{4\nu^2 - 1}{2^2 \cdot 1! \cdot (2x)^1}$, $B_2 = 1 + \frac{(-1)\cdot(4\nu^2 - 1)}{2^2 \cdot 1! \cdot (2x)^1}$,
\begin{gather}
I_{\nu}(x) = \frac{1}{\sqrt{2 \pi x}} \cdot \left(  e^{-x} \cdot e^{-i (\pi \nu + \frac{\pi}2 )}  \cdot (1 + \frac{4\nu^2 - 1}{2^2 \cdot 1! \cdot (2x)^1} + \tilde{R}_2^{(1)})  +  e^{x} \cdot ( 1 + \frac{(-1)\cdot(4\nu^2 - 1)}{2^2 \cdot 1! \cdot (2x)^1} + \tilde{R}_2^{(2)}) \right), \nonumber \\
|\tilde{R}_2^{(1)}|, |\tilde{R}_2^{(2)}|   \le 2\cdot G^2 \cdot \frac{|(4\nu^2 - 1)(4 \nu^2-9)|}{2^4 \cdot 2!} \cdot \frac{\Gamma(\frac12) \cdot \Gamma(2)}{\Gamma(\frac{3}2) \cdot |(2 x)^2|} = \nonumber \\
=  \frac{G^2}{16} \cdot |(4\nu^2 - 1)(4 \nu^2-9)| \cdot  \frac{\Gamma(\frac12) \cdot 1!}{\frac12 \Gamma(\frac{1}2) \cdot 4x^2} = \frac{G^2}{32 x^2} \cdot |(4\nu^2 - 1)(4 \nu^2-9)| =: \Delta. \nonumber
\end{gather}
Therefore,
\begin{gather}
\frac{I_{\nu}(x)}{\left( \frac{e^x}{\sqrt{2 \pi x}} \right) } - 1  = \left(  e^{-2x} \cdot e^{-i (\pi \nu + \frac{\pi}2 )}  \cdot (1 + \frac{4\nu^2 - 1}{8x} + \tilde{R}_2^{(1)})  +   \frac{(-1)\cdot(4\nu^2 - 1)}{8x} + \tilde{R}_2^{(2)} \right) \nonumber
\end{gather}
hence
\begin{gather}
\Biggl|  \frac{I_{\nu}(x)}{\left( \frac{e^x}{\sqrt{2 \pi x}} \right) } - 1 \Biggr| \le e^{-2x} \cdot \bigl| 1 + \frac{4\nu^2 - 1}{8x} \bigr|  + \Bigl| \frac{(4\nu^2 - 1)}{8x} \Bigr| + \Delta + \Delta \cdot (e^{-2x}) = \nonumber \\
= e^{-2x} + \bigl| \frac{(4\nu^2 - 1)}{8x} \bigr| \cdot (1 + e^{-2x}) + \Delta \cdot (1 + e^{-2x}) = e^{-2x} + \left( \frac{|4\nu^2 - 1|}{8x}  + \Delta \right) \cdot (1 + e^{-2x}).\nonumber
\end{gather}
In this case $G = \( 1 - \frac{\nu - \frac12}{2 x}\)^{ - \nu - \frac12}$. Thus $\Delta = \frac{G^2}{32x^2} \cdot |(4\nu^2 - 1)(4 \nu^2-9)| = \( 1 - \frac{\nu - \frac12}{2 x}\)^{ - 2\nu - 1} \cdot \frac{|(4\nu^2 - 1)(4 \nu^2-9)|}{32x^2}$.
Hence we obtain that
\begin{gather}
\Biggl|  \frac{I_{\nu}(x)}{\left( \frac{e^x}{\sqrt{2 \pi x}} \right) } - 1 \Biggr|  \le e^{-2x} + \left( \frac{|4\nu^2 - 1|}{8x}  + \Delta \right) \cdot (1 + e^{-2x}) \le \nonumber  \\
\le e^{-2x} + \left( \frac{|4\nu^2 - 1|}{8x}  + \( 1 - \frac{\nu - \frac12}{2 x}\)^{ - 2\nu - 1} \cdot \frac{|(4\nu^2 - 1)(4 \nu^2-9)|}{32x^2} \right) \cdot (1 + e^{-2x}) = \Psi(\nu, x).\nonumber
\end{gather} Theorem \ref{5STATth1} is proved.

\section{Proof of Proposition \ref{5STATpredl1} and Theorem \ref{5STATth2}}
First we prove Proposition  \ref{5STATpredl1}. We recall the formula for  $\alpha$ established in \cite{Sevastyanov}:
\begin{eqnarray}
\alpha= \int_{\frac{x_1^*}2}^{\infty} \int_{\frac{x_2^*}2}^{\infty} p_{2,0}(u_1, u_2)du_1 du_2 \nonumber
\end{eqnarray}
where
$$p_{2,0}(u_1,u_2)=\frac{e^{- (\frac{u_1}{\beta} + \frac{u_2}{\beta}) } }{K_2^{1+\delta} c^{\delta} \Gamma(1+\delta) \beta^2 }  \cdot \( \frac{u_1 u_2}{\beta^2}\)^{\frac{\delta}2} \cdot I_{\delta} \( \frac{2 c \sqrt{u_1 u_2} }{\beta} \). $$
Let us rewrite it in the following form:
\begin{gather}
\alpha = C \int_{\frac{x_1^*}2}^{\infty} \int_{\frac{x_2^*}2}^{\infty}  e^{- \frac{u_1}{\beta} - \frac{u_2}{\beta} } \cdot \(\frac{u_1 u_2}{\beta^2}\)^{\frac{\delta}2} \cdot I_{\delta} \( \frac{2 c\sqrt{u_1 u_2}}{\beta} \) \frac{du_1 du_2}{\beta^2}  \nonumber
\end{gather}
where $C = \frac{1}{K_2^{1+\delta} c^{\delta}\Gamma(1+\delta)} = \frac{c}{(K_2 c)^{1+\delta} \Gamma(1+\delta)}$. We make the substitution $t_1:=\sqrt{\frac{u_1}{\beta}}$, $t_2:=\sqrt{\frac{u_2}{\beta}}$. The Jacobian of the substitution is  $4t_1 t_2 \beta^2$, therefore
\begin{gather}
\alpha = 4C \int_{ x_1 }^{\infty} \int_{ x_2 }^{\infty}  e^{- t_1^2- t_2^2 } \cdot (t_1 t_2)^{\delta + 1} \cdot I_{\delta} \( 2c t_1 t_2 \) dt_1 dt_2 \nonumber
\end{gather}
where  $x_1 = \sqrt{\frac{x_1^*}{2\beta}}$ and $x_2 = \sqrt{ \frac{x_2^*}{2 \beta}}$.
Further, if $\delta > \frac12$ then in order to use Theorem  \ref{5STATth1} we require that the following condition holds: $ 2c\sqrt{\frac{x_1^* x_2^*}{4\beta^2}}> \frac{\delta}2 - \frac14$. In the case $\delta \in [0, \frac12)$ we will require that  $ 2c \sqrt{\frac{x_1^* x_2^*}{4\beta^2 }}> \frac{\delta}2 + \frac34$. In the case $\delta = \frac12$ nothing is required. In other words, we require that the following conditions hold:  $\delta \ge 0$,
\begin{gather}
x_1^*x_2^* > \frac{(1-c^2)^2}{c^2}\cdot \( \(\frac{\delta}2-\frac14\)^2\cdot I(\delta > \frac12) +\(\frac{\delta}2+\frac34\)^2\cdot I(0 \le \delta < \frac12) \) \nonumber
\end{gather}
where $I(A)=1$ or $I(A)=0$ if the condition $ A $ is satisfied or not satisfied respectively.
These conditions are equivalent to the fact that  $N \ge 3$ and
\begin{gather}
x_1^*x_2^* > \frac{(1-c^2)^2}{c^2}\cdot \( \(\frac{N}4 -1 \)^2\cdot I(N > 4) +\(\frac{N}4\)^2\cdot I(N = 3) \). \nonumber
\end{gather}
By Theorem \ref{5STATth1} and the monotonicity of the function $\Psi(\nu, x)$ we have:
\begin{gather}
\alpha  = 4C (1 + \theta_1) \int_{ x_1 }^{\infty} \int_{  x_2 }^{\infty}  e^{- t_1^2- t_2^2 } \cdot (t_1 t_2)^{\delta + 1} \cdot \frac{e^{2c t_1 t_2} }{\sqrt{2 \pi} \sqrt{ 2c t_1 t_2} } dt_1 dt_2 \nonumber
\end{gather}
where, as before, $|\theta_1| \le \Psi(\delta, 2c x_1 x_2) = \Psi\(\delta,
\frac{c\sqrt{ x_1^*x_2^*} }{1-c^2}\)$.
Hence,
\begin{equation}
\alpha = \frac{2C}{\sqrt{\pi c}} (1 + \theta_1) I_{1} \label{5STATalphaI1}
\end{equation} where \begin{gather}
I_{1} :=
\int_{ x_1 }^{\infty} \int_{  x_2 }^{\infty}  e^{- (t_1^2- 2c t_1 t_2 + t_2^2) } \cdot (t_1 t_2)^{\delta + \frac12}
dt_1 dt_2. \nonumber
\end{gather}
By definition, put $\rho=\frac{x_2}{x_1}$ (whence $\rho = \sqrt{\frac{x_2^*}{x_1^*}} > 0$) and change the variables in the last integral: $u = x_1^{-1} t_1$, $v= x_2^{-1} t_2$ (where $x_1$ and $x_2$  are fixed and defined above), then
\begin{gather}
I_{1} = \int_{ 1 }^{\infty} x_1 du \int_{  1}^{\infty} x_2dv \cdot e^{- (x_1^2 u^2- 2 c x_1 u x_2 v + x_2^2 v^2)  } \cdot (x_1 u x_2 v)^{\delta + \frac12}
= (x_1 x_2)^{(\delta+\frac{3}2 )} \cdot I_{2} \label{5STATI17I18}
\end{gather}
where \begin{gather}
I_{2} = \int_{ 1 }^{\infty} dt_1 \int_{  1}^{\infty}  dt_2 \cdot e^{- \lambda(t_1^2- 2c \rho t_1 t_2 + {\rho}^2 t_2^2)  } \cdot (t_1 t_2)^{\delta + \frac12} \nonumber
\end{gather}
and where $\lambda := x_1^2$. Let us obtain the asymptotics of the Laplace integral from the previous formula with the help of standard methods of analysis. \\
Suppose that $c<\frac{x_2}{x_1} < \frac1{c}$ (i.e. $c < \rho < \frac1{c}$). For an arbitrary  $\varepsilon > 0$ we put by definition \\ ${D}_{\varepsilon}:=\{(t_1, t_2): t_1 \ge 1, t_2 \ge 1, \max((t_1-1), {\rho} (t_2 - 1)) \ge \varepsilon \}$. Then
\begin{equation}
I_{2} = I_{3}+I_{4} \label{5STATI18I19I20}
\end{equation}
where
\begin{gather}
I_{3}:= \int_{1}^{1+\varepsilon}  dt_1 \int_{1}^{1+\frac{\varepsilon}{{\rho}}} dt_2 \cdot e^{-  {\lambda}(t_1^2- 2c{\rho} t_1 t_2 + {\rho}^2 t_2^2)  } \cdot (t_1 t_2)^{\delta + \frac12}, \nonumber \\
I_{4}:= \int_{{D}_{\varepsilon}} dt_1 dt_2 \cdot e^{- {\lambda}(t_1^2- 2c {\rho} t_1 t_2 + {\rho}^2 t_2^2)  } \cdot (t_1 t_2)^{\delta + \frac12}. \nonumber
\end{gather}
Let us obtain convenient formulas for $I_{3}$ and $I_{4}$. Note that
\begin{gather}
I_{3}= (1+\theta_{2})\int_{1}^{1+\varepsilon}  dt_1 \int_{1}^{1+\frac{\varepsilon}{{\rho}}} dt_2 \cdot e^{- {\lambda}(t_1^2- 2c{\rho} t_1 t_2 + {\rho}^2 t_2^2)  } \nonumber
\end{gather}
where $0 \le \theta_{2}  \le ((1+\varepsilon)(1+\frac{\varepsilon}{{\rho}}))^{\delta+\frac12}-1$. We make the substitution: $u=t_1 - 1, v=\rho(t_2-1)$ (whence $t_1=u+1$ and $t_2 = \frac{v}{\rho}+1$) and get:
\begin{gather}
I_{3}= (1+\theta_{2})\int_{0}^{\varepsilon}  du \int_{0}^{\varepsilon} \frac{dv}{\rho} \cdot e^{- \lambda((u+1)^2- 2c \rho (u+1) (\frac{v}{\rho}+1) + \rho^2 (\frac{v}{\rho}+1)^2)  }= \nonumber\\
=(1+\theta_{2})\int_{0}^{\varepsilon}  du \int_{0}^{\varepsilon} \frac{dv}{\rho} \cdot e^{- \lambda(u^2 +2u+1- 2cuv -2cv -2c \rho u -2c\rho + v^2 + 2v\rho + \rho^2)}=\nonumber \\
 =(1+\theta_{2})\cdot \frac{e^{-\lambda(\rho^2-2c\rho+1)}}{\rho} \cdot  \int_{0}^{\varepsilon}  du \cdot
 e^{- \lambda(u^2 +2u(1-c\rho))} \int_{0}^{\varepsilon} dv \cdot e^{- \lambda(v^2 + 2(\rho-c-cu)v)}= \nonumber\\
 =(1+\theta_{2})\cdot \frac{e^{-\lambda(\rho^2-2c\rho+1)}}{\rho} \cdot  \int_{0}^{\varepsilon}  du \cdot
 e^{- \lambda(u^2 +2u(1-c\rho) - (\rho-c-cu)^2)} \int_{0}^{\varepsilon} dv \cdot e^{- \lambda(v^2 + 2(\rho-c-cu)v + (\rho-c-cu)^2)}=\nonumber \\
 =(1+\theta_{2})\cdot \frac{e^{-\lambda(\rho^2-2c\rho+1)}}{\rho} \cdot  \int_{0}^{\varepsilon}  du \cdot
 e^{- \lambda(u^2 +2u(1-c\rho) - (\rho-c-cu)^2)} \cdot F(u) \label{5STATI12I14}
\end{gather}
where
\begin{gather}
F(u):=\int_{0}^{\varepsilon} dv \cdot e^{-\lambda(v+b_u)^2}, \ \ \ b_u:= \rho-c-cu. \nonumber
\end{gather}
If $0 \le u \le \varepsilon$ then $b_u \in [\rho-c-c\varepsilon, \rho -c]$. Let us assume that $\rho-c-c\varepsilon>0$  (i.e. $\varepsilon < \frac{\rho}{c}-1$). In the expression for $F(u)$  we put $w:=(v+b_u)^2$ (whence $v=\sqrt{w}-b_u$) and get:
\begin{gather}
F(u)=\int_{b_u^2}^{(b_u+\varepsilon)^2}e^{-\lambda w} \cdot \frac{dw}{2\sqrt{w}}=\frac{1}{2b_u}\int_{b_u^2}^{(b_u+\varepsilon)^2} \frac{b_u}{\sqrt{w}}e^{-\lambda w} \cdot dw. \nonumber
\end{gather}
Since $w \in [b_u^2, (b_u+\varepsilon)^2]$ we have $\frac{1}{\sqrt{w}} \in \Bigl[ \frac{1}{b_u+\varepsilon}, \frac{1}{b_u}\Bigr]$ and $\frac{b_u}{\sqrt{w}} \in \Bigl[ 1 - \frac{\varepsilon}{b_u+\varepsilon}, 1\Bigr]$. Thus,
\begin{gather}
F(u)=\frac{(1-\tilde{\theta}_3(u))}{2b_u}\int_{b_u^2}^{(b_u+\varepsilon)^2} e^{-\lambda w} \cdot dw
\nonumber
\end{gather}
where $0 \le \tilde{\theta}_3(u) \le \max_{u \in [0, \varepsilon]}\frac{\varepsilon}{b_u+\varepsilon} \le \frac{\varepsilon}{\rho - c -c\varepsilon+\varepsilon} =\frac{\varepsilon}{\rho - c +\varepsilon (1-c)} \le  \frac{\varepsilon}{\rho - c}$.
Hence,
\begin{gather}
F(u)=\frac{(1-\tilde{\theta}_3(u))}{2b_u} \cdot \frac{e^{-\lambda b_u^2} - e^{-\lambda (b_u+\varepsilon)^2}}{\lambda}  =\frac{(1-\tilde{\theta}_3(u))}{2b_u} \cdot \frac{e^{-\lambda b_u^2}}{\lambda}\cdot(1-\tilde{\theta}_{4}(u))\label{5STATI14final}
\end{gather}
where
\begin{gather}
0 \le \tilde{\theta}_{4}(u) \le \max_{u \in [0, \varepsilon]}\frac{e^{-\lambda (b_u+\varepsilon)^2}}{e^{-\lambda b_u^2}} = \max_{u \in [0, \varepsilon]} e^{-\lambda(2(\rho-c-cu) \varepsilon + \varepsilon^2)} \le e^{-\lambda(2\varepsilon(\rho - c - c \varepsilon)+ \varepsilon^2)} = e^{-\lambda\varepsilon(2(\rho - c - c \varepsilon)+ \varepsilon)} \nonumber.
\end{gather}
Combining  (\ref{5STATI12I14}) and (\ref{5STATI14final}) we see that
\begin{gather}
I_{3} =(1+\theta_{2})\cdot \frac{e^{-\lambda(\rho^2-2c\rho+1)}}{\rho} \cdot  \int_{0}^{\varepsilon}  du \cdot
 e^{- \lambda(u^2 +2u(1-c\rho) - (\rho-c-cu)^2)} \cdot \frac{(1-\tilde{\theta}_3(u))}{2b_u} \cdot \frac{e^{-\lambda b_u^2}}{\lambda}\cdot(1-\tilde{\theta}_{4}(u)) = \nonumber \\
= e^{-\lambda(\rho^2-2c\rho+1)} \cdot \frac{(1+\theta_{2})(1-\theta_3)(1-\theta_{4})}{2\lambda \rho (\rho-c)} \cdot \int_{0}^{\varepsilon}  du \cdot \frac{\rho-c}{b_u} \cdot  e^{- \lambda(u^2 +2u(1-c\rho))} \nonumber
\end{gather}
where $0 \le \theta_3 \le \frac{\varepsilon}{\rho - c}$, $0 \le \theta_4 \le e^{-\lambda \varepsilon (2(\rho - c- c \varepsilon)+\varepsilon)}$. If $0 \le u \le \varepsilon$ then $b_u \in [\rho-c-c\varepsilon, \rho -c]$, whence
$\frac{\rho-c}{b_u} \in [1, 1+ \frac{c\varepsilon}{\rho-c-c\varepsilon}]$. Hence,
\begin{gather}
I_{3} = e^{-\lambda(\rho^2-2c\rho+1)} \cdot \frac{(1+\theta_{2})(1-\theta_3)(1-\theta_{4})(1+\theta_{5})}{2\lambda \rho (\rho-c)} \cdot I_{5} \label{5STATI15I12}
\end{gather}
where
\begin{gather}
0 \le \theta_{5} \le \frac{c\varepsilon}{\rho-c-c\varepsilon}, \ \ \
I_{5}:=\int_{0}^{\varepsilon}  du \cdot  e^{- \lambda(u^2 +2u(1-c\rho))}.\nonumber
\end{gather}
By definition, $\gamma:=1-c\rho$. Then $\gamma > 0$ and
\begin{gather}
I_{5}=\int_{0}^{\varepsilon}  du \cdot  e^{- \lambda(u^2 +2\gamma u)} = e^{\lambda \gamma^2} \cdot \int_{0}^{\varepsilon} e^{- \lambda(u +\gamma)^2}du = e^{\lambda \gamma^2} \cdot \int_{\gamma}^{\gamma+\varepsilon} e^{- \lambda u^2}du = e^{\lambda \gamma^2} \cdot \int_{\gamma^2}^{(\gamma+\varepsilon)^2} e^{- \lambda t}d\sqrt{t} =  \nonumber \\
= \frac{e^{\lambda \gamma^2}}2 \cdot \int_{\gamma^2}^{(\gamma+\varepsilon)^2} \frac1{\sqrt{t}} e^{- \lambda t}dt= \frac{e^{\lambda \gamma^2}}{2\gamma} \cdot \int_{\gamma^2}^{(\gamma+\varepsilon)^2} \frac{\gamma}{\sqrt{t}} e^{- \lambda t}dt. \nonumber
\end{gather}
If  $\gamma^2 \le t \le (\gamma+\varepsilon)^2$ then $1 - \frac{\varepsilon}{\gamma+\varepsilon} =\frac{\gamma}{\sqrt{(\gamma+\varepsilon)^2}}\le \frac{\gamma}{\sqrt{t}}\le \frac{\gamma}{\sqrt{\gamma^2}}=1$ so
\begin{gather}
I_{5}= \frac{(1-\theta_{6})e^{\lambda \gamma^2}}{2\gamma} \cdot \int_{\gamma^2}^{(\gamma+\varepsilon)^2} e^{- \lambda t}dt \nonumber
\end{gather}
where $0 \le \theta_{6} \le \frac{\varepsilon}{\gamma+\varepsilon} = \frac{\varepsilon}{1-c\rho+\varepsilon}$. Thus,
\begin{gather}
I_{5}= \frac{(1-\theta_{6})e^{\lambda \gamma^2}}{2\gamma} \cdot
\frac{e^{- \lambda \gamma^2}-e^{- \lambda (\gamma+\varepsilon)^2}}{\lambda}= \frac{(1-\theta_{6})}{2\gamma\lambda} \cdot (1-\theta_{7}) \nonumber
\end{gather}
where $\theta_{7}  = e^{- \lambda (\gamma+\varepsilon)^2} \cdot e^{\lambda \gamma^2}=e^{-\lambda \varepsilon(2\gamma + \varepsilon)}$.
Hence,
\begin{gather}
I_{5}= \frac{(1-\theta_{6})(1-\theta_{7})}{2\lambda(1-c\rho)},  \ \ \ \theta_{7} =e^{-\lambda \varepsilon(2(1-c\rho) + \varepsilon)}  \nonumber
\end{gather}
whence, taking (\ref{5STATI15I12}) into account, we find that
\begin{gather}
I_{3} = e^{-\lambda(\rho^2-2c\rho+1)} \cdot \frac{(1+\theta_{2})(1-\theta_3)(1-\theta_{4})(1+\theta_{5})}{2\lambda \rho (\rho-c)} \cdot \frac{(1-\theta_{6})(1-\theta_{7})}{2\lambda(1-c\rho)} = \nonumber \\
=e^{-\lambda(\rho^2-2c\rho+1)} \cdot \frac{(1+\theta_{2})(1-\theta_3)(1-\theta_{4})(1+\theta_{5})(1-\theta_{6})(1-\theta_{7})}{4\lambda^2 \rho (\rho-c)(1-c\rho)}. \label{5STATI19}
\end{gather}
Now let us get a convenient expression for $I_{4}$. We need the following notation: ${S}(t_1, t_2):=t_1^2 - 2 c {\rho} t_1 t_2 + {\rho}^2t_2^2$.
Note that (for ${\lambda}>1$)
\begin{gather}
0 \le I_{4}= \int_{{D}_{\varepsilon}} dt_1 dt_2 e^{- ({\lambda}-1){S}(t_1,t_2)} \cdot (t_1 t_2)^{\delta + \frac12} \cdot e^{- (t_1^2- 2c{\rho} t_1 t_2 + {\rho}^2 t_2^2)  } \le \nonumber\\
\le e^{- ({\lambda}-1)\min_{{D}_{\varepsilon}}{S}(t_1,t_2)} \cdot I_{6} \label{5STATi4i12}
\end{gather}
where $I_{6}=\int_{{D}_{\varepsilon}} dt_1 dt_2 \cdot (t_1 t_2)^{\delta + \frac12} \cdot e^{- (t_1^2- 2c {\rho} t_1 t_2 + {\rho}^2 t_2^2)  }$.
Recall that \\ ${D}_{\varepsilon}:=\{(t_1, t_2): t_1 \ge 1, t_2 \ge 1, \max(t_1-1), \rho (t_2 - 1) \ge \varepsilon \}$. We assume that $\varepsilon < \frac1{c}-\rho $ and consequently  $\frac1{c\rho}>1+\frac{\varepsilon}{\rho}$. It is easy to see that $\min_{{D}_{\varepsilon}} S(t_1, t_1) = \min(A_1, A_2, A_3)$ where
\begin{equation}
A_1:=\min_{t_2 \ge \frac{1}{c\rho}} \min_{t_1 \ge 1} S(t_1, t_2), \ \ \
A_2:=\min_{1 + \frac{\varepsilon}{\rho} \le t_2 \le \frac{1}{c\rho}} \min_{t_1 \ge 1} S(t_1, t_2), \ \ \
A_3:=\min_{1 \le t_2 \le1 + \frac{\varepsilon}{\rho}} \min_{t_1 \ge 1+\varepsilon} S(t_1, t_2).
\nonumber
\end{equation}
Wherein
\begin{gather}
A_1 = \min_{t_2 \ge \frac{1}{c\rho}}\min_{t_1 \ge 1} [(t_1 - c \rho t_2)^2+\rho^2(1-c^2)t_2^2]=\min_{t_2 \ge \frac{1}{c\rho}} [0^2+\rho^2(1-c^2)t_2^2]= \rho^2(1-c^2)\frac{1}{(c\rho)^2}=\frac{1-c^2}{c^2}. \nonumber
\end{gather}
Further,
\begin{gather}
A_2=\min_{1 + \frac{\varepsilon}{\rho} \le t_2 \le \frac{1}{c\rho}} \min_{t_1 \ge 1} [(t_1 - c \rho t_2)^2+\rho^2(1-c^2)t_2^2] = \min_{1 + \frac{\varepsilon}{\rho} \le t_2 \le \frac{1}{c\rho}} [(1 - c \rho t_2)^2+\rho^2(1-c^2)t_2^2] = \nonumber \\
=\min_{1 + \frac{\varepsilon}{\rho} \le t_2 \le \frac{1}{c\rho}} (\rho^2 t_2^2 - 2c\rho t_2 + 1) =
1-c^2 + \rho^2 \cdot \min_{1 + \frac{\varepsilon}{\rho} \le t_2 \le \frac{1}{c\rho}} \(t_2 - \frac{c}{\rho}\)^2. \nonumber
\end{gather}
Note that $\frac{c}{\rho} < 1$ since $c < \rho$. Therefore,
\begin{gather}
A_2= 1-c^2 + \rho^2 \cdot \(1 +  \frac{\varepsilon}{\rho} - \frac{c}{\rho}\)^2= 1-c^2 + (\rho-c+\varepsilon)^2=1-c^2+\rho^2+c^2+\varepsilon^2 - 2c\rho+2\rho\varepsilon -2c\varepsilon = \nonumber \\ =(\rho^2-2c\rho+1)+\varepsilon(2(\rho-c)+\varepsilon)=S(1,1)+\varepsilon(2(\rho-c)+\varepsilon).
\nonumber
\end{gather}
Further, we assume that the inequality $\varepsilon < \frac{1}c - \rho$ holds. Therefore, $c\rho t_2 \le c(\rho+\varepsilon) < 1$ for $t_2 \le  1 + \frac{\varepsilon}{\rho}$. Hence,
\begin{gather}
A_3=\min_{1 \le t_2 \le 1 + \frac{\varepsilon}{\rho}} \min_{t_1 \ge 1+\varepsilon} [(t_1 - c \rho t_2)^2+\rho^2(1-c^2)t_2^2] =\min_{1 \le t_2 \le 1 + \frac{\varepsilon}{\rho}}  [(1+\varepsilon - c \rho t_2)^2+\rho^2(1-c^2)t_2^2] = \nonumber \\
=\min_{1 \le t_2 \le 1 + \frac{\varepsilon}{\rho}}  (c^2 \rho^2 t_2^2 - 2c(1+\varepsilon)\rho t_2 + (1+\varepsilon)^2 + \rho^2(1-c^2)t_2^2) = \nonumber \\
=(1+\varepsilon)^2 - c^2 (1+\varepsilon)^2 + \min_{1 \le t_2 \le 1 + \frac{\varepsilon}{\rho}}(\rho t_2 - c(1+\varepsilon))^2 = (1-c^2) (1+\varepsilon)^2 + \rho^2 \cdot \min_{1 \le t_2 \le 1 + \frac{\varepsilon}{\rho}} \Bigl(t_2 - \frac{c(1+\varepsilon)}{\rho}\Bigr)^2. \nonumber
\end{gather}
We assume that  $\rho-c-c\varepsilon>0$ (i.e. $\varepsilon < \frac{\rho}{c}-1$). Therefore, $\frac{c(1+\varepsilon)}{\rho} < 1$ and thus
\begin{gather}
A_3 = (1-c^2) (1+\varepsilon)^2 + \rho^2 \(1 - \frac{c(1+\varepsilon)}{\rho}\)^2 = (1-c^2) (1+\varepsilon)^2  + \rho^2 - 2c \rho (1 + \varepsilon) + c^2 (1 + \varepsilon)^2 = \nonumber \\
=\rho^2 - 2\rho c + 1 + \varepsilon(2(1-c\rho)+\varepsilon) = S(1,1) + \varepsilon(2(1-c\rho)+\varepsilon) \nonumber
\end{gather}
and $2(1-c\rho)+\varepsilon>0$ since $\rho < \frac1{c}$.
Hence,
\begin{gather}
\min_{{D}_{\varepsilon}} S(t_1, t_2)= \min \Bigl( \frac{1-c^2}{c^2}, S(1,1)+\varepsilon(2(\rho-c)+\varepsilon), S(1,1) + \varepsilon(2(1-c\rho)+\varepsilon)\Bigr)= \nonumber \\
=\min \Bigl( \frac{1-c^2}{c^2}, S(1,1)+\varepsilon(2\min(\rho-c, 1-c\rho)+\varepsilon) \Bigr). \nonumber
\end{gather}
Let us show that $\frac{1-c^2}{c^2} > \rho^2 - 2c \rho+1$\label{5STAT1-c^2versus}. In fact, $c < \rho < \frac1{c}$ therefore $\frac1{c}-c > \rho - c > 0$, whence $(\frac{1-c^2}{c})^2>(\rho-c)^2$. Hence, $(1-c^2)\frac{1-c^2}{c^2}+(1-c^2) > \rho^2 - 2c\rho+c^2+(1-c^2)$, thus $\frac{1-c^2}{c^2} > \rho^2 - 2c\rho+1=S(1,1)$. We require  $\varepsilon$ be small enough that inequality $\varepsilon(2\min(\rho-c, 1-c\rho)+\varepsilon) <  \frac{1-c^2}{c^2} - S(1,1)$ holds true. Then
\begin{gather}
\min_{{D}_{\varepsilon}} S(t_1, t_2)=  S(1,1)+\varepsilon(2\min(\rho-c, 1-c\rho)+\varepsilon) = \nonumber \\ = \rho^2 - 2\rho c + 1 +\varepsilon(2\min(\rho-c, 1-c\rho)+\varepsilon) \nonumber
\end{gather}
whence by virtue of (\ref{5STATi4i12}) we obtain that
\begin{gather}
0 \le I_{4} \le e^{- ({\lambda}-1)({\rho}^2 - 2c {\rho} +1 + \varepsilon(2\min({\rho}-c, 1-c{\rho})+\varepsilon))} \cdot I_{6} \label{5STATI20I21}
\end{gather}
where $I_{6}=\int_{{D}_{\varepsilon}} dt_1 dt_2 \cdot (t_1 t_2)^{\delta + \frac12} \cdot e^{- (t_1^2- 2c {\rho} t_1 t_2 + {\rho}^2 t_2^2)  }$. Since the integral $I_{6}$ converges,  $I_{4}$ (for fixed $\varepsilon$) gives an exponentially small  (in $\lambda$) contribution to $I_{2}$ as compared to $I_{3}$. We estimate this contribution, taking into account that due to the exponential smallness the upper estimate for the integral  $I_{6}$  can be made quite coarse. It is easy to see that
\begin{gather}
0 \le I_{6}=\int_{{D}_{\varepsilon}} dt_1 dt_2 \cdot (t_1 t_2)^{\delta + \frac12} \cdot e^{- (t_1^2- 2c {\rho} t_1 t_2 + {\rho}^2 t_2^2)  } \le
\int_{1}^{\infty} dt_1 \int_{1}^{\infty} dt_2 \cdot (t_1 t_2)^{\delta + \frac12} \cdot e^{- (t_1^2- 2c {\rho} t_1 t_2 + {\rho}^2 t_2^2)  } =\nonumber \\
=\frac1{{\rho}^{\delta+\frac32}}\cdot \int_{1}^{\infty} dt_1  \cdot t_1^{\delta + \frac12} \cdot e^{- t_1^2} \int_{1}^{\infty} {\rho} dt_2 \cdot ({\rho}t_2)^{\delta + \frac12} e^{-(\rho^2t_2^2-2c{\rho}t_1 t_2)} =\nonumber\\
=\frac1{{\rho}^{\delta+\frac32}}\cdot \int_{1}^{\infty} dt_1  \cdot t_1^{\delta + \frac12} \cdot e^{- t_1^2} \int_{{\rho}}^{\infty} du \cdot u^{\delta + \frac12} e^{-(u^2-2ct_1 u)}\le \nonumber \\
\le \frac1{{\rho}^{\delta+\frac32}}\cdot \int_{0}^{\infty} dt_1  \cdot t_1^{\delta + \frac12} \cdot e^{- t_1^2} \int_{{\rho}}^{\infty} du \cdot u^{\delta + \frac12} e^{-(u^2-2ct_1 u)}= \nonumber
\\
= \frac1{{\rho}^{\delta+\frac32}}\cdot \int_{0}^{\infty} dt_1  \cdot t_1^{\delta + \frac12} \cdot e^{- (1-c^2)t_1^2} \int_{{\rho}}^{\infty} du \cdot u^{\delta + \frac12} e^{-(u-ct_1)^2} = \nonumber \\
= \frac1{{\rho}^{\delta+\frac32}}\cdot \int_{0}^{\infty} dt_1  \cdot t_1^{\delta + \frac12} \cdot e^{- (1-c^2)t_1^2} \cdot \int_{{\rho} - ct_1}^{\infty} (v+ct_1)^{\delta + \frac12} e^{-v^2} \cdot dv = \nonumber \\
= \frac1{{\rho}^{\delta+\frac32}}\cdot \int_{0}^{\infty} dt_1  \cdot t_1^{\delta + \frac12} \cdot e^{- (1-c^2)t_1^2} \cdot I_{7}(t_1) \label{5STATI21I22}
\end{gather}
where $I_{7}(t_1):=\int_{{\rho}-ct_1}^{\infty} (v+ct_1)^{\delta + \frac12} e^{-v^2} \cdot dv$. Recall that  $\delta=\frac{N-3}2$ where $N\ge 2$ (the case $N=1$ is trivial and we don't consider it), so $\delta+\frac{1}2 \ge 0$.  Further,
\begin{gather}
0 \le I_{7}(t_1) = \int_{{\rho}-ct_1}^{\infty}  (v+ct_1)^{\delta + \frac12} e^{-v^2} \cdot dv \le \int_{-ct_1}^{\infty}  (v+ct_1)^{\delta + \frac12} e^{-v^2} \cdot dv =  \nonumber \\
= \int_{-ct_1}^{ct_1}  (v+ct_1)^{\delta + \frac12} e^{-v^2} \cdot dv + \int_{ct_1}^{\infty}  (v+ct_1)^{\delta + \frac12} e^{-v^2} \cdot dv \le \nonumber \\
\le (2ct_1)^{\delta + \frac12} \int_{-ct_1}^{ct_1}  e^{-v^2} \cdot dv + \int_{ct_1}^{\infty}  (v+ct_1)^{\delta + \frac12} e^{-v^2} \cdot dv.\nonumber
\end{gather}
Since $(v+ct_1)^{\delta + \frac12} \le (2v)^{\delta + \frac12}$ for $v \ge ct_1 \ge 0$, we have
\begin{gather}
0 \le I_{7}(t_1) \le (2ct_1)^{\delta + \frac12} \int_{-ct_1}^{ct_1}  e^{-v^2} \cdot dv + \int_{ct_1}^{\infty}  (v+ct_1)^{\delta + \frac12} e^{-v^2} \cdot dv \le \nonumber \\
\le (2ct_1)^{\delta + \frac12} \int_{-ct_1}^{ct_1}  e^{-v^2} \cdot dv + \int_{ct_1}^{\infty}  (2v)^{\delta + \frac12} e^{-v^2} \cdot dv \le \nonumber \\
\le (2ct_1)^{\delta + \frac12} \int_{\mathbb{R}}  e^{-v^2} \cdot dv + \int_{0}^{\infty}  (2v)^{\delta + \frac12} e^{-v^2} \cdot dv = \nonumber \\
= (2ct_1)^{\delta + \frac12} \sqrt{\pi}+2^{\delta + \frac12} \cdot \int_{0}^{\infty}  u^{\frac{\delta}2 + \frac14} e^{-u} \cdot \frac{du}{2\sqrt{u}}= \nonumber
(2ct_1)^{\delta + \frac12} \sqrt{\pi}+2^{\delta - \frac12} \cdot \int_{0}^{\infty}  u^{\frac{\delta}2 + \frac34-1} e^{-u} \cdot du= \nonumber  \\=
 (2c)^{\delta + \frac12} \sqrt{\pi}  t_1^{\delta + \frac12} +2^{\delta - \frac12} \cdot \Gamma\(\frac{\delta}2 + \frac34\) = B_1 \cdot  t_1^{\delta + \frac12} + B_2\nonumber
\end{gather}
where $B_1  = (2c)^{\delta + \frac12} \sqrt{\pi} > 0$ and  $B_2 =2^{\delta - \frac12} \cdot \Gamma\(\frac{\delta}2 + \frac34\)> 0$. Hence, by virtue of (\ref{5STATI21I22}) we obtain the following relations:
\begin{gather}
0\le {\rho}^{\delta+\frac32}\cdot I_{6} \le \int_{0}^{\infty} dt_1  \cdot t_1^{\delta + \frac12} \cdot e^{- (1-c^2)t_1^2} \cdot I_{7}(t_1) \le \int_{0}^{\infty} dt_1  \cdot t_1^{\delta + \frac12} \cdot e^{- (1-c^2)t_1^2} \cdot \( B_1 \cdot  t_1^{\delta + \frac12} + B_2\) = \nonumber \\
=B_1 \cdot \int_{0}^{\infty} dt_1  \cdot t_1^{\tilde{\delta} + \frac12} \cdot e^{- (1-c^2)t_1^2} + B_2 \cdot
\int_{0}^{\infty} dt_1  \cdot t_1^{\delta + \frac12} \cdot e^{- (1-c^2)t_1^2} \label{5STATdlyaZ}
\end{gather}
where $\tilde{\delta}:=2\delta+\frac12$. Let $$Z(\delta):=\int_{0 }^{\infty} u^{\delta + \frac12} \cdot e^{-(1-c^2)u^2} du.$$
We make the substitution $t=(1-c^2)u^2$ (whence $u=\frac{\sqrt{t}}{\sqrt{1-c^2}}$). We get:
\begin{gather}
Z(\delta) = \int_{0 }^{\infty} \Biggl(\sqrt{\frac{t}{1-c^2}}\Biggr)^{\delta + \frac12} \cdot e^{-t} \cdot \frac{dt}{2\sqrt{t}\sqrt{1-c^2}} = \frac{1}{2(1-c^2)^{\frac{\delta}2+\frac{3}4 }} \int_{0 }^{\infty} t^{\frac{\delta}2+\frac34-1 } e^{-t} dt =\frac{\Gamma(\frac{\delta}2+\frac34)}{2(1-c^2)^{\frac{\delta}2+\frac{3}4 }}. \nonumber
\end{gather}
From (\ref{5STATdlyaZ}) if follows that
\begin{gather}
0 \le {\rho}^{\delta+\frac32} \cdot I_{6}  \le
B_1 \cdot \frac{\Gamma(\frac{\tilde{\delta}}2+\frac34)}{2(1-c^2)^{\frac{\tilde{\delta}}2+\frac{3}4 }} + B_2 \cdot \frac{\Gamma(\frac{\delta}2+\frac34)}{2(1-c^2)^{\frac{\delta}2+\frac{3}4 }} = \nonumber \\
=B_1 \cdot \frac{\Gamma(\delta+1)}{2(1-c^2)^{\delta+1}} + B_2 \cdot \frac{\Gamma(\frac{\delta}2+\frac34)}{2(1-c^2)^{\frac{\delta}2+\frac{3}4 }}. \nonumber
\end{gather}

Consequently,
\begin{gather}
0 \le I_{6}  \le \frac1{{\rho}^{\delta+\frac32} } \cdot \((2c)^{\delta + \frac12} \sqrt{\pi} \cdot \frac{\Gamma(\delta+1)}{2(1-c^2)^{\delta+1}} + 2^{\delta - \frac12} \cdot \Gamma\(\frac{\delta}2 + \frac34\) \cdot \frac{\Gamma(\frac{\delta}2+\frac34)}{2(1-c^2)^{\frac{\delta}2+\frac{3}4 }}
 \)  \nonumber.
\end{gather}
Combining the relations (\ref{5STATI18I19I20}), (\ref{5STATI19}) and (\ref{5STATI20I21}) we see that
\begin{gather}
I_{2} = I_{3}+I_{4} = e^{-{\lambda}({\rho}^2-2c{\rho}+1)} \cdot \frac{(1+\theta_{2})(1-\theta_{3})(1-\theta_{4})(1+\theta_{5})(1-\theta_{6})(1-\theta_{7})}{4{\lambda}^2 {\rho} ({\rho}-c)(1-c{\rho})} + I_{4} \label{5STATI18final}
\end{gather}
and
\begin{gather}
0 \le I_{4} \le e^{- ({\lambda}-1)({\rho}^2 - 2c {\rho} +1 + \varepsilon(2\min({\rho}-c, 1-c{\rho})+\varepsilon))} \cdot I_{6}. \nonumber
\end{gather}
Now we obtain a formula for $\alpha$,  taking into account the relations (\ref{5STATalphaI1}), (\ref{5STATI17I18}) and (\ref{5STATI18final}). Namely,
\begin{gather}
\alpha = \frac{2C}{\sqrt{\pi c}} (1 + \theta_1) I_{1} = \frac{2c}{(K_2 c)^{1+\delta}\Gamma(1+\delta)\sqrt{\pi c}} \cdot (1 + \theta_1) ({x}_1 {x}_2)^{(\delta+\frac{3}2 )} \cdot I_{2} \label{5STAT1fladlyath20}
\end{gather}
where
\begin{gather}
I_{2} = e^{-{\lambda}({\rho}^2-2c{\rho}+1)} \cdot \(\frac{(1+\theta_{2})(1-\theta_{3})(1-\theta_{4})(1+\theta_{5})(1-\theta_{6})(1-\theta_{7})}{4{\lambda}^2 {\rho} ({\rho}-c)(1-c{\rho})} + \tilde{I}_{4}\) \label{5STAT2fladlyath20}
\end{gather}
and
\begin{gather}
0 \le \tilde{I}_{4}:=I_{4} \cdot e^{{\lambda}({\rho}^2-2c{\rho}+1)} \le e^{- ({\lambda}-1)({\rho}^2 -2c {\rho} +1 + \varepsilon(2\min({\rho}-c, 1-c{\rho})+\varepsilon))} \cdot e^{{\lambda}({\rho}^2-2c{\rho}+1)} \cdot I_{6} = \nonumber \\
= e^{- {\lambda}\varepsilon(2\min({\rho}-c, 1-c{\rho})+\varepsilon)} \cdot
e^{({\rho}^2 - 2c {\rho} +1 + \varepsilon(2\min({\rho}-c, 1-c{\rho})+\varepsilon))}I_{6}. \nonumber
\end{gather}  We replace $\delta$ and $K_2$ by $\frac{N-3}2$ and $\frac1{1-c^2}$ respectively. Recall that ${x}_1 = \sqrt{\frac{x_1^*}{2(1-c^2)}}$ and $ {x}_2 = \sqrt{ \frac{x_2^*}{2 (1-c^2)}}$. Hence,
\begin{gather}
\alpha =  \frac{2c}{\(\frac{c}{1-c^2}\)^{\frac{N-1}2}\Gamma(\frac{N-1}2)\sqrt{c \pi}} \cdot  ({x}_1 {x}_2)^{\frac{N}2} \cdot I_{2} (1+ \theta_1) = \nonumber \\
 =\frac{2c(1-c^2)^{\frac{N-1}2}}{c^{\frac{N-1}2}\Gamma(\frac{N-1}2)\sqrt{c \pi}} \cdot  \(\sqrt{\frac{x_1^*}{2(1-c^2)} \cdot \frac{x_2^*}{2(1-c^2)}}\)^{\frac{N}2} \cdot I_{2} (1+ \theta_1) = \nonumber \\
 =\frac{2(1-c^2)^{\frac{N-1}2}(x_1^* x_2^*)^{\frac{N}4}}{c^{\frac{N-2}2}\Gamma(\frac{N-1}2)\sqrt{ \pi}[2(1-c^2)]^\frac{N}2} \cdot  I_{2} (1+ \theta_1)
 =\frac{(x_1^* x_2^*)^{\frac{N}4}}{2^{\frac{N}{2}-1}c^{\frac{N}2-1}\Gamma(\frac{N-1}2)\sqrt{ \pi(1-c^2)}} \cdot  I_{2} (1+ \theta_1). \nonumber
\end{gather}
Thus, Proposition \ref{5STATpredl1} is proved.

We proceed to the proof of Theorem  \ref{5STATth2}. Since ${\rho}=\frac{{x}_2}{{x}_1}$ we have $\frac{x_2^*}{x_1^*}={\rho}^2$. Hence, for $\lambda:=x_1^2 \to +\infty$ the following equivalence holds true:
\begin{gather}
\alpha \sim  \frac{(x_1^* x_2^*)^{\frac{N}4}}{(2c)^{\frac{N}{2}-1}\Gamma(\frac{N-1}2)\sqrt{ \pi(1-c^2)}}
\cdot e^{-{\lambda}({\rho}^2-2c{\rho}+1)} \cdot \frac{1}{4{\lambda}^2 {\rho} ({\rho}-c)(1-c{\rho})} =\nonumber \\
=
\frac{(x_1^* {\rho}^2 x_1^*)^{\frac{N}4} e^{-{x}_1^2({\rho}^2-2c{\rho}+1)}}{(2c)^{\frac{N}{2}-1}\Gamma(\frac{N-1}2)\sqrt{ \pi(1-c^2)}}
 \cdot \frac{1}{4{x_1}^4 {\rho} ({\rho}-c)(1-c{\rho})} = \nonumber \\
 =\frac{(x_1^* {\rho})^{\frac{N}2} e^{-{x}_1^2({\rho}^2-2c{\rho}+1)}}{(2c)^{\frac{N}{2}-1}\Gamma(\frac{N-1}2)\sqrt{ \pi(1-c^2)}}
 \cdot \frac{(2(1-c^2))^2}{4{x_1^*}^2 {\rho} ({\rho}-c)(1-c{\rho})} = \nonumber \\
 =\frac{(x_1^*)^{\frac{N}2-2} {\(\frac{{\rho}}{2c}\)}^{\frac{N}2-1} \cdot (1-c^2)^{\frac32} \cdot e^{-\frac{x_1^*}{2(1-c^2)}({\rho}^2-2c{\rho}+1)}}{\sqrt{\pi}\Gamma(\frac{N-1}2)({\rho}-c)(1-c{\rho})}. \nonumber
\end{gather}
Let us estimate the error of the approximation obtained (for $\lambda \to +\infty$).  For this we fix
the number $a>0$. We put
$$\varepsilon(\lambda):=\frac{a \ln \lambda}{\lambda}.$$ Then $\varepsilon \to 0$ and (in the notation of Proposition \ref{5STATpredl1}) $\theta_i = O(\varepsilon)$, $i=2,3,5,6$. Further, due to choosing a sufficiently large (fixed) number $a$ we can assume that $\theta_i = O(\frac{1}{\lambda})$, $i=4,7$ and $\tilde{I}_4 = O(\frac{1}{\lambda^3})$. Since $\Psi(\nu, x) = O(\frac{1}{x})$ as $x \to +\infty$ then $\theta_1 = O(\frac{1}{x_1^*})$. Finally,
notice, that
\begin{gather}
O(\varepsilon) = O\( \frac{a\ln \lambda}{\lambda} \) = O \(\frac{\ln x_1^*}{x_1^*}\), \ \ x_1^* \to +\infty \nonumber
\end{gather}
since $\lambda = \frac{x_1^*}{2\beta}$. Theorem \ref{5STATth2} is proved.
\begin{remark}
Note that in the case  $N=2$ (and consequently $\delta < 0$)  all results are preserved with natural changes due to Remark \ref{5STATzame4oBessel} and small changes will concern only that part of the reasoning in which Infeld's function is replaced by its asymptotics.\end{remark}
\begin{remark}
If $\frac{x_2}{x_1} < c$ then  it is easy to show (using analogous arguments) that $$argmin_{t_1 \ge 1, t_2 \ge 1}(t_1^2- 2c \rho t_1 t_2 + \rho^2 t_2^2) = \(1, \frac{c}{\rho}\),$$
and then use arguments similar to the proof of Theorem  \ref{5STATth2} and obtain a formula for $\alpha$ in this case.  Similarly, we may consider the case  $\frac{x_2}{x_1} > \frac1{c}$. We also note that both of these cases are less interesting from a practical point of view.
\end{remark}
\section{Proof of Theorem \ref{5STATth3}}
\begin{lemma} \label{5STATlemma1}
Let $k\ge 2$ be fixed. Then the following relation holds:
\begin{gather}
\int_{x}^{\infty}\frac{t^{\frac{k}2-1}\cdot e^{-\frac{t}2}}{2^{\frac{k}2 }}dt=\(\frac{x}2\)^{\frac{k}2-1}\cdot e^{-\frac{x}2} \cdot \(1+{o}(1)  \), \ \ \ x \to +\infty. \nonumber
\end{gather}
\end{lemma}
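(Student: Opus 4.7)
The plan is to reduce the integral to the incomplete gamma form and then extract the leading term by a single integration by parts, bounding the remainder with a self-referential estimate.

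First I would make the substitution $s=t/2$ (so $dt=2\,ds$) to rewrite the integral as
\begin{gather}
\int_{x}^{\infty}\frac{t^{\frac{k}2-1}e^{-\frac{t}2}}{2^{\frac{k}2}}\,dt=\int_{\frac{x}{2}}^{\infty} s^{\frac{k}{2}-1} e^{-s}\,ds=:I(x), \nonumber
\end{gather}
which is the upper incomplete gamma function $\Gamma(k/2,x/2)$. The task is therefore to show that $I(x)\sim (x/2)^{k/2-1}e^{-x/2}$ as $x\to+\infty$ for fixed $k\ge 2$.

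Next I would integrate by parts with $u=s^{k/2-1}$, $dv=e^{-s}\,ds$, yielding
\begin{gather}
I(x)=\Bigl(\tfrac{x}{2}\Bigr)^{\frac{k}{2}-1} e^{-\frac{x}{2}}+\Bigl(\tfrac{k}{2}-1\Bigr)\,J(x),\qquad J(x):=\int_{\frac{x}{2}}^{\infty} s^{\frac{k}{2}-2} e^{-s}\,ds. \nonumber
\end{gather}
For $k=2$ the coefficient $k/2-1$ vanishes and the identity $I(x)=e^{-x/2}$ is in fact exact, so the lemma holds trivially. For $k>2$, I would exploit that on the interval of integration $s\ge x/2$, hence $1/s\le 2/x$, giving the key bound $J(x)\le (2/x)\,I(x)$. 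Substituting this back into the integration-by-parts identity produces a self-referential inequality
\begin{gather}
I(x)\le \Bigl(\tfrac{x}{2}\Bigr)^{\frac{k}{2}-1} e^{-\frac{x}{2}}+\frac{k-2}{x}\,I(x), \nonumber
\end{gather}
which for $x$ sufficiently large (say $x>2(k-2)$) can be solved for $I(x)$ to give $I(x)\le 2\,(x/2)^{k/2-1}e^{-x/2}$.

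Feeding this uniform upper bound back into $J(x)\le (2/x)I(x)$ shows that $(k/2-1)J(x)=O(x^{-1})\cdot(x/2)^{k/2-1}e^{-x/2}$, which is $o\!\left((x/2)^{k/2-1}e^{-x/2}\right)$. Combined with the integration-by-parts identity, this yields $I(x)=(x/2)^{k/2-1}e^{-x/2}(1+o(1))$, completing the proof. There is no real obstacle here; the only subtlety is making the self-referential step rigorous, i.e. observing that the inequality $J(x)\le (2/x)I(x)$ combined with the identity lets one bootstrap an a priori bound on $I(x)$ without circular reasoning, which is clean once one notes that $I(x)$ is finite for each $x>0$ so the bound $I(x)(1-(k-2)/x)\le (x/2)^{k/2-1}e^{-x/2}$ can simply be divided through for $x>k-2$.
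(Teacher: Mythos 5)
Your proof is correct, but it takes a different route from the paper. The paper disposes of the lemma in one line by applying L'Hospital's rule to the ratio $\int_{x}^{\infty}t^{\frac{k}{2}-1}e^{-t/2}\,2^{-k/2}\,dt \big/ \bigl((x/2)^{\frac{k}{2}-1}e^{-x/2}\bigr)$: the numerator and denominator both tend to $0$, the derivative of the numerator is $-x^{\frac{k}{2}-1}e^{-x/2}2^{-k/2}$, and the derivative of the denominator is $-\frac12(x/2)^{\frac{k}{2}-1}e^{-x/2}(1+O(1/x))$, so the limit is $1$. Your argument instead reduces to the incomplete gamma function, integrates by parts once, and bootstraps the a priori bound $I(x)\le 2(x/2)^{\frac{k}{2}-1}e^{-x/2}$ from the self-referential inequality; all steps check out, including the correct observation that $k=2$ is exact and that the inequality $s^{\frac{k}{2}-2}\le (2/x)\,s^{\frac{k}{2}-1}$ on $[x/2,\infty)$ holds regardless of the sign of $\frac{k}{2}-2$. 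What your approach buys is an explicit, quantitative error term: you actually show the $o(1)$ is $O(1/x)$ (uniformly for $x>2(k-2)$), which is more information than the paper extracts and could be iterated to a full asymptotic expansion. What it costs is length; the L'Hospital computation is essentially immediate. Both proofs are valid for all fixed real $k\ge 2$.
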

\begin{proof}
By the L'Hospital's rule
\begin{gather}
\lim_{x \to +\infty} \frac{\int_{x}^{\infty}\frac{t^{\frac{k}2-1}\cdot e^{-\frac{t}2}}{2^{\frac{k}2 }}dt}{\(\frac{x}2\)^{\frac{k}2-1}\cdot e^{-\frac{x}2}}=\lim_{x \to +\infty}  \frac{   - \( \frac{x^{\frac{k}2-1}\cdot e^{-\frac{x}2}}{2^{\frac{k}2 }}  \) }{   \(\frac{x}2\)^{\frac{k}2-1}\cdot e^{-\frac{x}2} \cdot \( - \frac12\) + (\frac{k}2-1)\cdot \(\frac{x}2\)^{\frac{k}2-2} \cdot \frac 12 \cdot e^{-\frac{x}2}   } = 1. \nonumber
\end{gather}
This proves the lemma.
\end{proof}
Now let us prove Theorem \ref{5STATth3}. Recall that   $\alpha_1:=\lim_{n_1 \to \infty}P(X(n_1)>x_1^*)$, \\ $\alpha_2:=\lim_{n_2 \to \infty}P(X(n_2)>x_2^*)$.
Further, within the framework of our model the distribution $\chi^2(N-1)$ is the limit distribution for both  $X(n_1)$ and $X(n_2)$. It is known that  $x_1^* \to +\infty$ and ${x}_2^* \to + \infty$ (and therefore  $\alpha_1 \to +0$ and $\alpha_2 \to +0$ ) so that $\sqrt{\frac{\ln \alpha_2}{\ln \alpha_1}}=P$. Hence, $\alpha_2$ can be expressed via $\alpha_1$, and the numbers  $x_1^*$ and $x_2^*$ (and therefore also the numbers  $x_1 = \sqrt{\frac{x_1^*}{2\beta}}$, $x_2 = \sqrt{ \frac{x_2^*}{2 \beta}}$) are determined by the numbers  $\alpha_1$, $\alpha_2$ uniquely. Further, it follows from Lemma \ref{5STATlemma1} that \begin{gather}
\alpha_1\Gamma\(\frac{N-1}2\) = \(\frac{x_1^*}2\)^{\frac{N-3}2} \cdot e^{-\frac{x_1^* }{2} }  \cdot
\( 1 + {o}(1) \),  \ \ \ \
\alpha_2\Gamma\(\frac{N-1}2\) = \(\frac{x_2^*}2\)^{\frac{N-3}2} \cdot e^{-\frac{x_2^* }{2} }  \cdot
\( 1 + {o}(1) \),\nonumber
\end{gather} therefore
$x_i^* \sim -2 \ln \alpha_i$, $i = 1,2$, where the notation $f(x) \sim g(x)$ means that  $\frac{f(x)}{g(x)} \to 1$ (as $x \to +\infty$).
It is easy to see that $
\alpha_i\Gamma(\frac{N-1}2) \sim \(-\ln \alpha_i\)^{\frac{N-3}2} \cdot e^{-\frac{x_i^* }{2} },
$ therefore $\(-\ln \alpha_i\)^{\frac{3-N}2} \cdot \alpha_i\Gamma(\frac{N-1}2) \sim e^{-\frac{x_i^* }{2} }$
whence
\begin{gather}
\(-\ln \alpha_i\)^{N-3} \cdot \(\alpha_i\Gamma\(\frac{N-1}2\)\)^{-2} \sim e^{x_i^*}. \label{5STATexpx}
\end{gather}
Further, the ratio ${\rho}$ was fixed in Theorem \ref{5STATth2} but now it is not so. Nevertheless, ${\rho}=\frac{{x}_2}{{x}_1} \to P$. Indeed,  ${x}_i: = \sqrt{\frac{x_i^*}{2\beta}} \sim\sqrt{\frac{-2 \ln \alpha_i}{2\beta}} = \sqrt{\frac{-\ln \alpha_i}{1-c^2}}$, it means that ${\rho} = \frac{{x_2}}{{x_1}} \sim \sqrt{\frac{\ln \alpha_2}{\ln \alpha_1}}=P$.
We assume that $c < P < \frac1{c}$. Since ${\rho} \to P$ for sufficiently small  $\alpha_1$ the following inequality holds: $c < \frac{{x}_2}{{x}_1} < \frac1{c}$. Hence, taking to account formulas (\ref{5STAT1fladlyath20}) and (\ref{5STAT2fladlyath20}), obtained in the proof of Proposition \ref{5STATpredl1}, we have:
\begin{gather}
\alpha \sim \frac{c({\rho}{x_1}^2)^{(\delta+\frac{3}2 )} \cdot e^{-{x}_1^2({\rho}^2-2c{\rho}+1)}}{2(K_2 c)^{1+\delta}\Gamma(1+\delta)\sqrt{\pi c}{x}_1^4 {\rho} ({\rho}-c)(1-c{\rho})}. \nonumber
\end{gather}
Since  ${x}_i \sim  \sqrt{\frac{-\ln \alpha_i}{1-c^2}}$ and ${\rho} \sim P$ we obtain
\begin{gather}
\alpha \sim \frac{c \cdot {(P  \cdot \frac{-\ln \alpha_1}{1-c^2}  )}^{(\delta+\frac{3}2 )} \cdot e^{-({x}_2^2 -2c {x}_1 \cdot {x}_2 + {x}_1^2 )}   }{2(K_2 c)^{1+\delta}\Gamma(1+\delta)\sqrt{\pi c}{\( \frac{-\ln \alpha_1}{1-c^2} \)}^{2} P (P-c)(1-c P)} = \nonumber  \\
=\frac{c \cdot {(P  \cdot \frac{-\ln \alpha_1}{1-c^2}  )}^{(\delta+\frac{3}2 )} \cdot \(e^{x_1^*} \cdot e^{x_2^*}  \cdot e^{-2c\sqrt{x_1^* x_2^*}} \)^{-\frac1{2(1-c^2)}} }{2(K_2 c)^{1+\delta}\Gamma(1+\delta)\sqrt{\pi c}{\( \frac{-\ln \alpha_1}{1-c^2} \)}^{2} P (P-c)(1-c P)} . \label{5STATalph}
\end{gather}
It remains to express $e^{\sqrt{x_1^* x_2^*}}$ via $\alpha_i$. We need the following Lemma.
\begin{lemma} \label{5STATlemma2}
Suppose $n \ge 0$ and for $t_i \to +\infty$ the following relations hold: $\alpha_i(t_i) = t_i^n e^{-t_i}\( 1 +{o}(1)\)$, $i =1,2$. Suppose the continious mapping $\alpha_i(t_i)$ is a bijection between $(0, \alpha_i^0]$ and $[t_i^0, +\infty)$.
If $\sqrt{\frac{\ln \alpha_2}{\ln \alpha_1}}=P=const$ then $e^{\sqrt{t_1 t_2}} \sim  \alpha_1^{-P} \cdot \((- \ln \alpha_1)^{P} \cdot (-\ln \alpha_2)^{\frac{1}{P}}\)^{\frac{n}2}$ as $\max(\alpha_1, \alpha_2) \to +0$.
\end{lemma}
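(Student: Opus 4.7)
The plan is to invert the asymptotic $\alpha_i(t_i) = t_i^n e^{-t_i}(1+o(1))$ so as to express $t_i$ in terms of $\alpha_i$ with enough precision that the substitution into $\sqrt{t_1 t_2}$ retains an additive $o(1)$ error; after that, exponentiation yields the claim immediately.

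First, taking logarithms of the given relation gives $\ln \alpha_i = n \ln t_i - t_i + \ln(1+o(1))$, i.e.\ $t_i = -\ln \alpha_i + n \ln t_i + o(1)$. From this I would deduce the crude estimate $t_i \sim -\ln \alpha_i$ and then feed it back to obtain the refined expansion
\begin{gather*}
t_i = -\ln \alpha_i + n \ln(-\ln \alpha_i) + o(1), \qquad i = 1,2.
\end{gather*}
The hypothesis that $\alpha_i(t_i)$ is a continuous bijection is what legitimizes speaking of $t_i$ as a function of $\alpha_i$ and taking limits as $\alpha_i \to 0$.

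Second, I would write $\sqrt{t_1 t_2} = \sqrt{(-\ln\alpha_1)(-\ln\alpha_2)} \cdot \sqrt{(1+\eta_1)(1+\eta_2)}$ where $\eta_i = \frac{n\ln(-\ln\alpha_i) + o(1)}{-\ln\alpha_i} \to 0$. Expanding $(1+\eta_i)^{1/2} = 1 + \tfrac{\eta_i}{2} + O(\eta_i^2)$, and using the identity $\sqrt{\ln \alpha_2/\ln \alpha_1} = P$ in the form $\sqrt{(-\ln\alpha_1)(-\ln\alpha_2)} = P \cdot (-\ln\alpha_1) = P^{-1}(-\ln\alpha_2)$, the cross-terms simplify to
\begin{gather*}
\sqrt{t_1 t_2} = -P\ln\alpha_1 + \tfrac{nP}{2}\ln(-\ln\alpha_1) + \tfrac{n}{2P}\ln(-\ln\alpha_2) + o(1).
\end{gather*}

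Third, exponentiating this identity and using $e^{-P\ln\alpha_1} = \alpha_1^{-P}$ gives
\begin{gather*}
e^{\sqrt{t_1 t_2}} \sim \alpha_1^{-P}\cdot (-\ln\alpha_1)^{nP/2}\cdot (-\ln\alpha_2)^{n/(2P)},
\end{gather*}
which is precisely the claim. The only delicate point is verifying that the $o(1)$ remainder in $t_i$, when multiplied by the bounded factor $\sqrt{t_{3-i}/t_i} \to P^{\pm 1}$, still yields an additive $o(1)$ inside the exponent (so that after exponentiation it becomes a harmless $1+o(1)$ multiplier); this is where having the fixed constant $P \in (0,\infty)$ is crucial, and it is the main bookkeeping step but not a genuine obstacle.
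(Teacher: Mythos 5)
Your proposal is correct and follows essentially the same route as the paper: invert $\alpha_i = t_i^n e^{-t_i}(1+o(1))$ to get $t_i = -\ln\alpha_i + n\ln t_i + o(1)$, substitute into $\sqrt{t_1 t_2}$ using $-\ln\alpha_2 = P^2(-\ln\alpha_1)$, check that the quadratic remainders times $-\ln\alpha_1$ are still $o(1)$, and exponentiate. The only cosmetic difference is that you carry the correction as $n\ln(-\ln\alpha_i)$ while the paper keeps it as $n\ln t_i$ and only at the end uses $\ln t_i \sim \ln(-\ln\alpha_i)$; both handle the delicate bookkeeping (namely $(-\ln\alpha_i)\,\eta_i^2 \to 0$) in the same way.
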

\begin{proof}
Note that $\max(\alpha_1, \alpha_2) \to +0$  if and only if  $\min(t_1, t_2) \to +\infty$. Further,
$$\ln \alpha_i = -t_i + n \ln t_i + \ln \(1+ {o}\(1\) \)= -t_i + n \ln t_i + {o}\(1\)$$
whence $t_i \sim -\ln \alpha_i$ and
\begin{gather}t_i = - \ln \alpha_i +  n \ln t_i +  {o}\(1\) = \(- \ln \alpha_i\) \cdot \( 1 + \frac{n \ln t_i +{o}\(1\)}{-\ln \alpha_i} \). \nonumber
\end{gather}
Hence,
\begin{equation}
t_i = \(- \ln \alpha_i\) \cdot (1 + \delta_i(t_i)) \label{5STATt-delta}
\end{equation}
where $\delta_i(t_i):=\frac{n \ln t_i +{o}\(1\)}{- \ln \alpha_i}$. Since $t_i \sim -\ln \alpha_i$ we have (taking (\ref{5STATt-delta}) into account) $ \delta_i(t_i) = {o}(1)$. Thus,
\begin{gather}
\sqrt{t_1 t_2} = \sqrt{(-\ln \alpha_1)(-\ln \alpha_2)(1+\delta_1)(1+\delta_2)}
=\nonumber \\
=\sqrt{\frac{-\ln \alpha_2}{-\ln \alpha_1}}\cdot(-\ln \alpha_1) \cdot [1+\frac{\delta_1}2 (1+{O}(\delta_1))]
\cdot [1+\frac{\delta_2}2 (1+{O}(\delta_2)) ]
= \nonumber \\
=P\cdot(-\ln \alpha_1) \cdot [1+\frac{\delta_1}2 (1+{O}(\delta_1)) + \frac{\delta_2}2 (1+{O}(\delta_2)) + \frac14\delta_1 \delta_2 (1+ {o}(1)) ]. \label{5STATsqrtstep}
\end{gather}
Let us simplify the expression obtained. For this we note that
\begin{gather}
\frac{\delta_2}{\delta_1} \sim \frac{n\ln t_2}{-\ln \alpha_2} \cdot \frac{-\ln \alpha_1}{n\ln t_1} = P^{-2}\frac{\ln t_2}{\ln t_1}. \nonumber
\end{gather}
In this case (by virtue of (\ref{5STATt-delta})) the following relation holds: $\ln t_i = \ln (-(1+\delta_i) \ln \alpha_i)$, whence $\ln t_i = \ln (- \ln \alpha_i) + \ln(1+\delta_i) \sim \ln (- \ln \alpha_i)$. In addition, $-\ln \alpha_2 = P^2 (-\ln \alpha_1)$. Hence,
\begin{gather}
\frac{\delta_2}{\delta_1} \sim  P^{-2}\frac{\ln t_2}{\ln t_1} \sim  P^{-2}\frac{\ln (- \ln \alpha_2)}{\ln (- \ln \alpha_1)} = P^{-2}\frac{\ln (P^2 (-\ln \alpha_1))}{\ln (- \ln \alpha_1)} = P^{-2}\frac{2\ln P + \ln (-\ln \alpha_1)}{\ln (- \ln \alpha_1)} \sim P^{-2} . \nonumber
\end{gather}
Therefore, $\delta_1 \delta_2 (1+ {o}(1))=\frac{\delta_2}{\delta_1} \cdot \delta_1^2 (1+ {o}(1)) = \delta_1^2 {O}(1).$
By (\ref{5STATsqrtstep}) we get
\begin{gather}
\sqrt{t_1 t_2} =
P\cdot(-\ln \alpha_1) \cdot [1+\frac{\delta_1}2 (1+{O}(\delta_1)) + \frac{\delta_2}2 (1+{O}(\delta_1)) + \frac14\delta_1^2 {O}(1)] = \nonumber \\
=P(-\ln \alpha_1)\(1+ \frac{\delta_1}2(1+{O}(\delta_1)) + \frac{\delta_2}2(1+{O}(\delta_2)) \)=
\nonumber \\
=P(-\ln \alpha_1)
 +  \frac{P}2(-\ln \alpha_1){\delta_1}(1+{O}(\delta_1)) +  \frac{P}2(-\ln \alpha_1){\delta_2}(1+{O}(\delta_2))=\nonumber \\
 =P(-\ln \alpha_1)
 +   \frac{P}2(n \ln t_1 + {o}(1))(1+{O}(\delta_1)) +  \frac{P}2 \delta_2 (- \ln \alpha_2)\cdot \(\frac{-\ln \alpha_1}{-\ln \alpha_2} \) (1+{O}(\delta_2))=\nonumber \\
 =P(-\ln \alpha_1) +   \frac{P}2(n \ln t_1 + {o}(1))(1+{O}(\delta_1)) +  \frac{P}2 (n \ln t_2 + {o}(1))P^{-2}(1+{O}(\delta_2)) =\nonumber \\
= P(-\ln \alpha_1) + \frac{Pn}2 \cdot (1+{O}(\delta_1)) \ln t_1 + \frac{n}{2P} \cdot (1+{O}(\delta_2)) \ln t_2 + {o}(1). \nonumber
\end{gather}
Note that $\delta_i \ln t_i \sim \frac{n \ln^2 t_i}{-\ln \alpha_i} \sim \frac{n \ln^2 t_i}{t_i}$. Hence $\delta_i \ln t_i = {o}(1)$. Thus,
\begin{gather}
\sqrt{t_1 t_2} = P(-\ln \alpha_1) + \frac{Pn}2 \cdot \ln t_1 + \frac{n}{2P} \cdot \ln t_2 + {o}(1). \nonumber
\end{gather}
Consequently,
\begin{gather}
e^{\sqrt{t_1 t_2}} \sim \alpha_1^{-P} \cdot t_1^{\frac{Pn}2} \cdot t_2^{\frac{n}{2P}} \sim \alpha_1^{-P} \cdot \((- \ln \alpha_1)^{P} \cdot (-\ln \alpha_2)^{\frac{1}{P}}\)^{\frac{n}2}. \nonumber
\end{gather}
Thus, Lemma is proved.
\end{proof}
Further, $\tilde{\alpha}_i :=\alpha_i\Gamma(\frac{N-1}2) = \(\frac{x_i^*}2\)^{\frac{N-3}2} \cdot e^{-\frac{x_i^* }{2} }  \cdot
\( 1 + {o}(1) \)$. By Lemma \ref{5STATlemma2} and by the fact that $\ln (const \cdot \alpha) \sim  \ln \alpha$ as $\alpha \to +0$, the following relations hold:
\begin{gather}
e^{\sqrt{\frac{x_1^*}2 \cdot \frac{x_2^*}2}} \sim \tilde{\alpha}_1^{-P} \cdot \((- \ln \tilde{\alpha}_1)^{P} \cdot (-\ln \tilde{\alpha}_2)^{\frac{1}{P}}\)^{\frac{N-3}4} \sim \( \Gamma\(\frac{N-1}2\) \alpha_1\)^{-P} \cdot \((- \ln \alpha_1)^{P} \cdot (-\ln \alpha_2)^{\frac{1}{P}}\)^{\frac{N-3}4}.\nonumber
\end{gather}
Taking into account that $-\ln \alpha_2 = -P^2 \ln \alpha_1$ we obtain that
\begin{gather}
e^{\sqrt{x_1^*x_2^*}} \sim \( \Gamma\(\frac{N-1}2\) \alpha_1\)^{-2P} \cdot \((- \ln \alpha_1)^{P} \cdot (-\ln \alpha_2)^{\frac{1}{P}}\)^{\frac{N-3}2}= \nonumber \\
= \( \Gamma\(\frac{N-1}2\) \alpha_1\)^{-2P} \cdot \((- \ln \alpha_1)^{P} \cdot ( -P^2 \ln \alpha_1)^{\frac{1}{P}}\)^{\frac{N-3}2} = \nonumber \\
=\( \Gamma\(\frac{N-1}2\) \alpha_1\)^{-2P}  \cdot P^{(\frac{N-3}P)} \cdot \(- \ln \alpha_1 \)^{(P+\frac1{P})\frac{N-3}2}. \label{5STATexpx1x2}
\end{gather}
Thus, taking into account (\ref{5STATexpx}), (\ref{5STATalph}) and (\ref{5STATexpx1x2}) we get that
\begin{gather}
\alpha \sim \frac{c \cdot {(P  \cdot \frac{-\ln \alpha_1}{1-c^2}  )}^{(\delta+\frac{3}2 )} {\tilde{Q}}^{-\frac1{2(1-c^2)}}}{2(K_2  c)^{1+\delta}\Gamma(1+\delta)\sqrt{\pi c}{\( \frac{-\ln \alpha_1}{1-c^2} \)}^{2} P (P-c)(1-c P)} \label{5STATpredposl} \\
\text{where \ \ } \tilde{Q}:=e^{x_1^*} \cdot e^{x_2^*}  \cdot e^{-2c\sqrt{x_1^* x_2^*}} \sim \nonumber \\
\sim \(-\ln \alpha_1\)^{N-3} \cdot \(\alpha_1\Gamma\(\frac{N-1}2\)\)^{-2} \cdot \(-\ln \alpha_2\)^{N-3} \cdot \(\alpha_2\Gamma\(\frac{N-1}2\)\)^{-2} \cdot e^{-2c\sqrt{x_1^* x_2^*}} =\nonumber \\
= \(\prod_{i=1}^2 \frac{\ln^{N-3} \alpha_i}{\alpha_i^2 } \) \cdot \(\Gamma\(\frac{N-1}2\)\)^{-4}
\cdot \(    \( \Gamma\(\frac{N-1}2\) \alpha_1\)^{-2P}  \cdot P^{\frac{N-3}P} \cdot \(- \ln \alpha_1 \)^{(P+\frac1{P})\frac{N-3}2}           \)^{-2c} = \nonumber \\
=\frac{(\ln^{N-3} \alpha_1) (\ln^{N-3} \alpha_2) }{ \alpha_1^2 \alpha_2^2 }   \cdot \( \Gamma\(\frac{N-1}2\)\)^{4Pc-4} \cdot \alpha_1^{4Pc} \cdot P^{2(\frac{c}{P})(3-N)}
\cdot \(- \ln \alpha_1 \)^{(P+\frac1{P})(3-N)c}    =: Q      . \nonumber
\end{gather}
Since $-\ln \alpha_2 = -P^2 \ln \alpha_1$ we have $\alpha_2 = \alpha_1^{P^2}$ whence
\begin{gather}
Q=\frac{(-\ln \alpha_1)^{N-3} (-P^2 \ln \alpha_1)^{N-3} }{ \alpha_1^2 \alpha_1^{2P^2} }   \cdot \( \Gamma\(\frac{N-1}2\)\)^{4Pc-4} \cdot \alpha_1^{4Pc} \cdot P^{2(\frac{c}{P})(3-N)}
\cdot \(- \ln \alpha_1 \)^{(P+\frac1{P})(3-N)c}    = \nonumber \\
=\frac{P^{2(N-3)(1-\frac{c}{P})} }{ \(\Gamma\(\frac{N-1}2\) \)^{4(1-Pc)} } \cdot (-\ln \alpha_1)^{(N-3)(2-c(P+P^{-1}))} \cdot  \alpha_1^{-2(P^2-2Pc+1)} .
 \nonumber
\end{gather}
In the formula (\ref{5STATpredposl}) replace  $\delta$ and $K_2$ with $\frac{N-3}2$ and $\frac1{1-c^2}$ respectively. We get 
\begin{gather}
\alpha \sim \frac{c \cdot {(P  \cdot \frac{-\ln \alpha_1}{1-c^2}  )}^{\frac{N}2 } Q^{-\frac1{2(1-c^2)}}}{2(\frac{c}{1-c^2})^{\frac{N-1}2}\Gamma(\frac{N-1}2)\sqrt{c\pi}{\( \frac{-\ln \alpha_1}{1-c^2} \)}^{2} P (P-c)(1-c P)}       =\nonumber \\
=\frac{(1-c^2)^{\frac{N-1}2}\cdot P^{\frac{N}2-1}\cdot (- \frac{\ln \alpha_1}{1-c^2})^{\frac{N}2-2} \cdot Q^{-\frac1{2(1-c^2)}}}{2c^{\frac{N}2-1}\sqrt{\pi}\Gamma\(\frac{N-1}2\)(P-c)(1-c P)} \nonumber
=\frac{(1-c^2)^{\frac{3}2}\cdot P^{\frac{N}2-1}\cdot (- \ln \alpha_1)^{\frac{N}2-2} \cdot Q^{-\frac1{2(1-c^2)}}}{2c^{\frac{N}2-1}\sqrt{\pi}\Gamma\(\frac{N-1}2\)(P-c)(1-c P)}    .\nonumber
\end{gather}
Thus, Theorem \ref{5STATth3} is proved.\\
Let us prove Corollary \ref{5STATsled1}. In fact, suppose the conditions of Theorem \ref{5STATth3} hold true and suppose $\alpha_1 = \alpha_2$. Then $P=1$. Hence,
\begin{gather}
Q = \(\frac{(-\ln \alpha_1)^{\frac{N-3}2}}{\alpha_1 \cdot \Gamma(\frac{N-1}2)}\)^{4(1-c)}. \nonumber
\end{gather}
In this case \begin{gather}
\alpha \sim \frac{(1-c^2)^{\frac{3}2}\cdot (- \ln \alpha_1)^{\frac{N}2-2} \cdot Q^{-\frac1{2(1-c^2)}}}{2c^{\frac{N}2-1}\sqrt{\pi}\Gamma\(\frac{N-1}2\)(1-c)^2}  =D \cdot {\(-\ln \alpha_1 \)}^{\frac{N}2-2} Q^{-\frac1{2(1-c^2)}} \nonumber
\end{gather}
where $D:=\frac{(1-c^2)^{\frac{3}2}}{2c^{\frac{N}2-1}\sqrt{\pi}\Gamma\(\frac{N-1}2\)(1-c)^2} $. So,
\begin{gather}
\alpha \sim D \cdot {\(-\ln \alpha_1  \)}^{\frac{N}2-2} \(\frac{(-\ln \alpha_1)^{\frac{N-3}2}}{\alpha_1 \cdot \Gamma(\frac{N-1}2)}\)^{\frac{-2}{1+c}}=D \cdot {\(-\ln \alpha_1 \)}^{\frac{N}2-2} \(\frac{\alpha_1 \cdot \Gamma(\frac{N-1}2)}{(-\ln \alpha_1)^{\frac{N-3}2}}\)^{\frac{2}{1+c}}.\nonumber
\end{gather}
Hence,
\begin{gather}
\alpha \sim \frac{(1-c^2)^{\frac{3}2}}{2c^{\frac{N}2-1}\sqrt{\pi}\Gamma\(\frac{N-1}2\)(1-c)^2}\cdot {\(-\ln \alpha_1 \)}^{\frac{N}2-2} \(\frac{\alpha_1 \cdot \Gamma(\frac{N-1}2)}{(-\ln \alpha_1)^{\frac{N-3}2}}\)^{\frac{2}{1+c}}= \nonumber\\
=\frac{(1-c^2)^{\frac32}{\(\Gamma(\frac{N-1}2)\)}^{\frac{1-c}{1+c}}}{2c^{\frac{N}2-1}\sqrt{\pi}(1-c)^2} \cdot
\(- \ln \alpha_1\)^{\frac{N}2 - \frac{N-3}{1+c}-2} \(\alpha_1 \)^{\frac{2}{1+c}}
\nonumber
\end{gather}
and $\frac{2}{1+c} \in (1,2)$ since $c \in (0,1)$. Thus, Corollary \ref{5STATsled1} is proved.

\section{Proof of Theorem \ref{5STATth4}}
Suppose $d \ge 2$ and $Bes_d(t)$ is the $d$-dimensional Bessel process. By definition, put $N:=d+1$. It was shown in \cite{Zubkov i ya} (see Theorem 2 and formula (11)) that if $0 < t_1 < t_2 \le 1$ and $x_1, x_2 \ge 0$ then
\begin{gather}
P(Bes_{N-1}(t_1) \ge x_1, ..., Bes_{N-1}(t_k) \ge x_k) = \lim_{n \to \infty} P\(X({\lfloor n t_1 \rfloor}) \ge \frac{x_1^2}{t_1},..., X({\lfloor n t_k \rfloor}) \ge \frac{x_k^2}{t_k}\).\label{5STATflabessel1}
\end{gather}
However,
\begin{gather}
P(X(n_1) > x_1^*, ..., X(n_k) > x_k^*) = \alpha(x_1^*, ..., x_k^*).
 \label{5STATflabessel2}
\end{gather}
Further, it is obvious that if  $c \ge 0$ then
\begin{gather}
Bes_d(ct) \overset{\text{  d }}{=} \sqrt{c} Bes_d (t). \label{5STATflabessel3}
\end{gather}
We fix the numbers $0 < s_1 < s_2$ and $x_1, x_2 \ge 0$.
Let $\tilde{s}:=\frac{s_1}{s_2}$, $\tilde{x}_1: = {\frac{x_1}{\sqrt{s_2}}}$,  $\tilde{x}_2 := {\frac{x_2}{\sqrt{s_2}}}$, $x_1^*:=\frac{\tilde{x}_1^2}{\tilde{s}}$, $x_2^*:=\tilde{x}_2^2$.
In view of (\ref{5STATflabessel1})--(\ref{5STATflabessel3}) we have:
\begin{gather}
P(Bes_{d}(s_1) \ge x_1, Bes_{d}(s_2) \ge x_2) = P(Bes_{d}(s_2 \cdot \tilde{s}) \ge x_1, Bes_{d}(s_2 \cdot 1) \ge x_2)= \\
=P(\sqrt{s_2} Bes_{d}(\tilde{s}) \ge x_1, \sqrt{s_2}Bes_{d}(1) \ge x_2)  = P(Bes_{d}(\tilde{s}) \ge \tilde{x}_1, Bes_{d}(1) \ge \tilde{x}_2) = \nonumber \\
=\lim_{n \to \infty} P\(X({\lfloor n \tilde{s} \rfloor}) \ge x_1^*, X({\lfloor n \cdot 1 \rfloor}) \ge x_2^*\) = \alpha(x_1^*, x_2^*) \label{5STATlastfla}
\end{gather}
for $n_1:=\lfloor n \tilde{s} \rfloor$, $n_2:=n$. Further, $c:=\lim_{n_1, n_2 \to + \infty} \sqrt{\frac{n_1}{n_2}} = \sqrt{\tilde{s}}$.
Let $x_1, x_2 \to + \infty$ in a such a way that $\rho = \sqrt{\frac{x_2^*}{x_1^*}} = const$ and $c < \rho < \frac1{c}$ (which is equivalent to the fact that $\sqrt{\frac{s_1}{s_2}} < \frac{x_2}{x_1} \cdot \sqrt{\frac{s_1}{s_2}} < \sqrt{\frac{s_2}{s_1}}$). Then Theorem \ref{5STATth2} is applicable (since $N = d +1 \ge 3$) and this fact together with (\ref{5STATlastfla}) (and equality $N = d+1$) completes the proof of Theorem \ref{5STATth4}.
\begin{remark}
The asymptotic properties of tails of the form $P(Bes_{d}(t_1) \ge x_1, ..., Bes_{d}(t_k) \ge x_k)$ for the case $k > 2$ may be obtained with the help of Theorem \ref{5STATth4} and the Bonferroni inequalities.
\end{remark}


{}

\begin{thebibliography}{9}
\bibitem{Sevastyanov} V.K. Zakharov, O.V. Sarmanov and V.A. Sevast'janov, Sequential $\chi^2$ criteria, Mathematics of the USSR-Sbornik, {\bfseries{8}}:3 (1969), 419-435.
\bibitem{watson} G.N. Watson, A treatise on the theory of Bessel functions, 1966.
\bibitem{lebedev} N.N. Lebedev, Special functions and their applications, 1965.
\bibitem{whittaker1} E.T. Whittaker, G.N. Watson, A Course of Modern Analysis, V.1, 1948.
\bibitem{whittaker}  E.T. Whittaker, G.N. Watson, A Course of Modern Analysis, V.2, 1948.
\bibitem{Selivanov} B.I. Selivanov, V.P. Chistyakov, “Multivariate chi-square distribution for a nonhomogeneous multinomial scheme”, Discrete Math. Appl., {\bfseries{8}}:3 (1998), 263–273.

\bibitem{Germogenov} A. P. Germogenov and A. F. Ronzhin,
A Sequential Chi-Square Test, Theory Probab. Appl., {\bfseries{29}}:2, 397–403.

\bibitem{Tymanyan} S. Kh. Tumanyan,
Asymptotic Distribution of The $\chi^2 $ Criterion when the Number of Observations and Number of Groups Increase Simultaneously, Theory Probab. Appl., {\bfseries{1}}:1 (1956), 117–131.
\bibitem{Zubkov i ya} A.M. Zubkov, M.P. Savelov, Convergence of the sequence of values of Pearson's statistics to the square of the normalized Bessel process (in russian),  Discrete Math. Appl., {\bfseries{28}}:3 (2016), 49-58.

\end{thebibliography}
\end{document}